\newcommand{\MBtext}[1]{{{\small\color{purple}{}}}}
\newcommand{\LRlong}[1]{{{\small\color{cyan}{}}}}
\newcommand{\LRtext}[1]{{{\small\color{cyan}{}}}}
\theoremstyle{plain}
\newtheorem{theorem}{Theorem}[section]
\newtheorem{proposition}[theorem]{Proposition}
\newtheorem{lemma}[theorem]{Lemma}
\newtheorem{corollary}[theorem]{Corollary}
\theoremstyle{definition}
\newtheorem{remark}[theorem]{Remark}
\newtheorem{example}[theorem]{Example}
\newcommand{\R}{\mathbb{R}}
\newcommand{\RR}{{\mathbb{R}_{++}}}
\newcommand{\E}{\mathbb{E}}
\newcommand{\M}{\mathscr{M}}
\newcommand{\cF}{\mathcal{F}}
\newcommand{\cP}{\mathcal{P}}
\newcommand{\scM}{\mathscr{M}}
\newcommand{\proj}{{\rm proj}}
\newcommand{\law}{{\rm law}}
\newcommand{\AW}{\mathcal{AW}}
\newcommand{\cplbc}{\Pi_{\rm bc}}
\newcommand{\SBM}{{\rm SBM}}
\newcommand{\gSBM}{{\rm gSBM}}
\newcommand{\F}{\mathcal F}
\newcommand{\G}{\mathcal G}
\renewcommand{\P}{\mathcal P}
\newcommand{\PP}{\mathbb P}
\newcommand{\QQ}{\mathbb Q}
\newcommand{\D}{\mathcal D}
\newcommand{\MCov}{\textnormal{MCov}}
\newcommand{\argmin}{\textnormal{argmin}}
\author{M.\ Beiglböck, G.\ Pammer, and L.\ Riess}
\begin{document}

\title{Change of numeraire for weak martingale transport}

\maketitle

%\author{M.\ Beiglböck, G.\ Pammer, and L.\ Riess}\thanks{Financial support through FWF-projects Y0782 and P35197 is gratefully acknowledged.}

\begin{abstract}
Change of numeraire is a classical tool in mathematical finance. Campi--Laachir--Martini \cite{CaLaMa14} established its applicability to martingale optimal transport. We note that the results of \cite{CaLaMa14} extend to the case of weak martingale transport. We apply this to shadow couplings (in the sense of \cite{BeJu21}), continuous time martingale transport problems in the framework of Huesmann--Trevisan \cite{HuTr17} and in particular to establish the correspondence between stretched Brownian motion with its geometric counterpart.\footnote{We emphasize that we learned about the geometric stretched Brownian motion \gSBM\ (defined in PDE terms) in a presentation of Loeper \cite{Lo23} before our work on this topic started. We noticed that a change of numeraire transformation in the spirit of \cite{CaLaMa14} allows for an alternative viewpoint in the weak optimal transport framework. We make our work public following the publication of Backhoff--Loeper--Obloj's work \cite{BaLoOb24} on {\tt arxiv.org}. The article \cite{BaLoOb24} derives \gSBM\  using  PDE techniques as well as through an independent probabilistic approach which is close to the one we give in the present article.}

\medskip

\noindent{\it }
\end{abstract}

\section{Overview}

While classical transport theory is concerned with the set $\Pi(\mu, \nu)$ of \emph{couplings} or \emph{transport plans} of probability measures $\mu,\nu$, the martingale variant restricts the problem to the set $\scM_{\{0,1\}}(\mu,\nu)$ of \emph{martingale couplings}, i.e.\ laws of two step martingales $(X_t)_{t\in\{0,1\}}$ satisfying $X_0 \sim \mu, X_1\sim \nu$. The investigation of such martingale transport plans is primarily motivated by applications in mathematical finance. In this case the interest lies in probabilities $\mu, \nu$ which are concentrated on the positive half line $\R_{++}= (0,\infty)$, have finite first moments and are in convex order. We make this a standing assumption throughout. 

In this article we revisit the change of numeraire technique which was first used in the context of martingale optimal transport by Campi--Laachir--Martini \cite{CaLaMa14}. This \emph{CN-transformation} (for brevity) converts a martingal transport plan $\pi$ between $\mu, \nu$ into a martingale transport plan `$S (\pi) $' between altered marginals (`$S(\mu), S(\nu)$') changing cost functionals on the way. In particular this operation allows to transfer known solutions of martingale transport problems to solutions of altered problems. 

\medskip

In Section \ref{sec:CLM_trafo_new_sec} we summarize the basic properties of the CN-transformation in the case of \emph{weak martingale optimal transport} (wMOT). Weak versions of transport problems were introduced by Gozlan--Roberto--Samson--Tetali \cite{GoRoSaTe15} and allow to consider non-linear cost functionals, see e.g.\ the survey \cite{BaPa20} and \cite{BeJoMaPa21b} for an overview in the martingale context. 

We then collect different applications:

In Section \ref{sec:AppgSBM} we apply the CN-transformation to stretched Brownian motions (or `Bass martingales'), \cite{Lo18, BaBeHuKa20, GuLoWa19,  BaBeScTs23}. Stretched Brownian motion (\SBM) is the process which has maximal correlation with Brownian motion, subject to having $\mu, \nu$ as prescribed initial and terminal marginals.
From a mathematical finance perspective it is natural to consider a geometric stretched Brownian motion (\gSBM) where the log returns maximize the correlation with Brownian motion subject to the marginal constraints. We show that \gSBM{} is the CN-transform of \SBM{} and provide numerical simulations based on the known algorithm for \SBM{} (\cite{CoHe21, AcMaPa23, JoLoOb23}).

In Section 3 we consider the adapted Wasserstein distance $\AW$ which provides a metric between continuous martingales which takes the flow of information into account. It is known from \cite{BaBeHuKa20} that stretched Brownian motion is the $\AW$-metric projection of Brownian motion onto the set of martingales with prescribed initial and terminal marginal. We introduce the geometric counterpart $g\AW$ of the adapted Wasserstein distance and establish that $\gSBM$ is a $g\AW$-metric projection.

In Section \ref{Sec:AppHT} we revisit continuous-time martingale transport problems in the spirit of Huesmann--Trevisan \cite{HuTr17} and Guo--Loeper--Wang \cite{GuLoWa19}.

In Section \ref{sec:AppShadow} we consider the class of shadow couplings of martingale transport plans, see \cite{BeJu21}. We find that the CN-transformation of a shadow coupling is again a shadow coupling but with `inverted' source. In particular this implies that left-monotone martingale couplings are transformed into right-monotone martingale couplings (recovering a result of \cite{CaLaMa14}) and that `sunset-couplings' are invariant under the CN-transformation.

\subsection{Martingale transport and the CN-transformation}\label{sec:CLM_trafo_new_sec}

We consider a set of time indices $I\subset [0,\infty)$ which is compact and denote its minimal element by $o$ and its maximal element by $T$. Examples which we have in mind are $I= \{0,1\}, I=[0,1]$ and $ I= \{t\}$ where $t\in [0,\infty)$. Throughout, we will denote for a probability $\eta$ on $\R$ and a measurable function $f:\R\to\R$, the push-forward, by $f_\#\eta$. Given a function $g$ on $\R$ we will write $\eta(g):=\int g\, d\eta$ whenever the integral exists. Additionally, we write $b(\eta)$ for the barycenter of the probability $\eta$.

Let $\mu, \nu\in \cP_1(\RR)$, that is, $\mu$ and $\nu$ are probabilities on $\RR$ with finite first moments.
We write $\M_I$ for the set of all laws of (cadlag) martingales $(X_t)_{t\in I}$ defined on some probability space $(\Omega, \F, \PP)$, and denote by $\M_I(\mu, \nu)$ the subset of martingale transport plans additionally satisfying $X_o\sim \mu, X_T\sim \nu$. A \emph{cost function} is a measurable, $[0,\infty]$-valued function $c$ defined on the \emph{path space} $\mathcal D_I$ of cadlag $\R_{++}$-valued functions on $I$. The corresponding martingale transport problem consists in 
\begin{align*}
    \inf\Big\{ \int c \, d\pi: \pi \in \M_I(\mu, \nu)\Big\}=\inf\Big\{ \E_\PP [c (X)]: X \text{  mart., $\law_\PP(X_o)= \mu, \law_\PP(X_T)= \nu  $}\Big\}. 
\end{align*}
Given a measurable lower bounded `generalized' cost function $C:\mathcal P_1(\RR) \to \R \cup \{ \infty \}$, the weak martingale transport problem is
\begin{multline*}
    \inf\Big\{ \int C(\pi_x) \, \mu(dx): \pi \in \M_{\{0,1\}}(\mu, \nu)\Big\} \\= \inf\Big\{ \E_\PP[C(\law_\PP(X_1|X_0))]: X \text{ mart., $\law_\PP(X_0)= \mu, \law_\PP(X_1)= \nu  $}\Big\}, 
\end{multline*}
where we use $(\pi_x)_x$ to denote the disintegration of $\pi$ with respect to the first marginal. 
Usually cost functions are assumed to be lower semi-continuous and $C$ is assumed to be convex which guarantees the existence of optimizers as well as weak duality, but this is not important at the current stage. 

\medskip

Next we discuss the classical change of numeraire transformation: Recall that \emph{Bayes' rule} asserts that for integrable random variables $D,Z$ on $(\Omega, \F, \PP)$, where $D>0$ and $\G\subseteq \F$ and $\hat \PP:= \frac{D}{\E_\PP[D]} \PP$ 
\[ \E_{\hat \PP}[Z|G] =  \frac {\E_\PP[ZD|\G]}{\E_\PP[D|G]}. \]
Given a path $x=(x_t)_{t\in I}$ we write $1/x$ for the path $(1/x_t)_{t\in I}$.
For a $\PP$-martingale $X=(X_t)_{t\in I}$, the process
 $Y:=1/X$ is a $\hat \PP$-martingale where $\hat\PP:= \frac{X_T}{\E_\PP[X_T]} \PP$. This follows directly from Bayes rule since for $t\in I$
$$\E_{\hat \PP} [Y_T|Y_t] = \frac {\E_\PP[Y_T X_T|\F_t]}{\E_\PP[X_T|\F_t]} = \frac{1}{X_t}=Y_t.$$
Importantly, $\law_{\hat\PP}(Y)$ depends only on $\law_\PP(X)$ since for $\pi:= \law_\PP(X)$ and measurable $B\subseteq \mathcal D(I)$ 
\begin{align*}
    \law_{\hat \PP}(Y)(B)= \E_\PP \Big[ \frac{X_TI_{\{1/X\in B\}}}{\E_\PP[X_T] } \Big] = \frac{\int x_T \delta_{1/x}(B)\,  \pi(dx) }{ \int x_T \, \pi(dx) }.
\end{align*}
We will call this transformation
\begin{align} \label{eq:def.CLM-pi_unified}
    S:\M_I \to \M_I, \quad S(\pi)(B) := \frac{\int x_T \delta_{1/x} (B)\, \pi(dx) }{ \int x_T \, \pi(dx)},\quad \text{for measurable $B\subseteq \D_I$}  
\end{align}
the \emph{CN}-transformation. 
We identify the sets $\M_{\{t\}}$ (where $t\in \R$) and the set $\cP_1(\R_{++})$ of probabilities with finite first moments such that $S(\pi) $ is also defined for $\pi\in \cP_1(\R_{++})$.

In the following two theorems we derive the basic properties of $S$.
In the statement as well as in the proof we will use the representation $S(\pi)= \law_{\hat\PP}(Y)$, where $\pi, X,Y, \PP, \hat \PP$ are as above. 
For $J\subseteq I$ we write $\proj^{\D_J} $ for the projection $\proj^{\D_J}:\D_I\to \D_J,(x_t)_{t\in I}\mapsto (x_t)_{t\in J}$.

\begin{theorem}\label{thm:first_properties_CLM}
    The map $S$ has the following properties:
    \begin{enumerate}[label = (\roman*)]
        \item $S$ is an involution, i.e.\
        \begin{align}\label{eq:Involution} \tag{P1}
            S(S(\pi))= \pi\quad  \text{for $\pi\in \M_I$}. 
        \end{align}
        \item If $J\subseteq I$ is compact, in particular if $J=\{t\}$ for some $t\in I$, we have 
            \begin{align}\label{eq:ProjOk} 
                \proj_\#^{\D_J}(S(\pi)) = S(\proj_\#^{\D_J}(\pi)) \quad  \text{for $\pi\in \M_I$}.  \tag{P2}
            \end{align}
        \item For $\mu, \nu\in \P_1(\R_{++})$
            \begin{align}\label{eq:PlanBij} 
                S:\scM_I(\mu, \nu) \to \scM_I(S(\mu), S(\nu)) \tag{P3} 
            \end{align}
            is a bijection. 
        \item The conditional distributions satisfy
            \begin{align}\label{eq:CLMforCondLaw} \tag{P4}
                \law_{\hat \PP}(Y_T|Y_o)= S(\law_{ \PP}(X_T|X_o )). 
            \end{align}
    \end{enumerate}
\end{theorem}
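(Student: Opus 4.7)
The plan is to work throughout with the probabilistic representation $S(\pi)=\law_{\hat\PP}(Y)$ given just before the statement, where $X=(X_t)_{t\in I}$ has law $\pi$ under $\PP$, $Y:=1/X$, and $d\hat\PP/d\PP=X_T/\E_\PP[X_T]$. The Bayes-rule computation preceding the statement already shows that $Y$ is a $\hat\PP$-martingale, so $S(\pi)\in\M_I$; the theorem therefore amounts to the four compatibility properties (P1)--(P4), each of which I reduce to a single application of Bayes' rule or of the tower property.

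For (P1) I iterate $S$. Setting $\tilde\PP$ by $d\tilde\PP/d\hat\PP=Y_T/\E_{\hat\PP}[Y_T]$ and computing $\E_{\hat\PP}[Y_T]=\E_\PP[X_T Y_T]/\E_\PP[X_T]=1/\E_\PP[X_T]$, the product of the two Radon--Nikodym derivatives simplifies to $1$, so $\tilde\PP=\PP$ and $S(S(\pi))=\law_\PP(1/Y)=\pi$; alternatively, the change of variables $y=1/x$ in~\eqref{eq:def.CLM-pi_unified} gives the same conclusion directly from the defining formula. For (P2) the key ingredient is the martingale identity $\E_\PP[X_T\mid\sigma(X|_J)]=X_{T_J}$, with $T_J:=\max J$, which is immediate from the tower property because $\sigma(X|_J)\subseteq\F^X_{T_J}$ and $X_{T_J}$ is $\sigma(X|_J)$-measurable. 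Applying this identity to the numerator of $(\proj^{\D_J})_\#(S(\pi))(A)$ for $A\subseteq\D_J$, together with $\E_\PP[X_T]=\E_\PP[X_{T_J}]$ in the denominator, replaces $x_T$ by $x_{T_J}$ throughout; since $x_{T_J}$ is a function of $x|_J$, what remains is precisely $S(\proj^{\D_J}_\#\pi)(A)$.

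Property (P3) is now a corollary: specialising (P2) to $J=\{o\}$ and $J=\{T\}$ gives $Y_o\sim S(\mu)$ and $Y_T\sim S(\nu)$, so $S$ sends $\scM_I(\mu,\nu)$ into $\scM_I(S(\mu),S(\nu))$, and bijectivity follows from the involution (P1) applied at both source and target. For (P4), Bayes' rule together with the martingale property yields, for bounded measurable $f:\RR\to\R$,
\[\E_{\hat\PP}[f(Y_T)\mid X_o]=\frac{\E_\PP[f(1/X_T)X_T\mid X_o]}{\E_\PP[X_T\mid X_o]}=\frac{\E_\PP[f(1/X_T)X_T\mid X_o]}{X_o},\]
which is exactly the integral of $f$ against $S(\law_\PP(X_T\mid X_o))$ as defined via~\eqref{eq:def.CLM-pi_unified}; using $Y_o=1/X_o$ to change the conditioning variable turns this into the claimed identity. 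The only step demanding genuine care is (P2), where one must spell out both the inclusion $\sigma(X|_J)\subseteq\F^X_{T_J}$ and the $\sigma(X|_J)$-measurability of $X_{T_J}$ precisely enough to invoke the martingale identity; everything else is bookkeeping with the defining formula~\eqref{eq:def.CLM-pi_unified}.
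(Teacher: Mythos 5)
Your proposal is correct and follows essentially the same route as the paper: the measure-change representation $S(\pi)=\law_{\hat\PP}(Y)$ for (P1), the tower property $\E_\PP[X_T\mid \F_{\max J}]=X_{\max J}$ for (P2), deducing (P3) from (P1) and (P2), and Bayes' rule for (P4). The only differences are cosmetic (spelling out the measurability of $X_{\max J}$ in (P2) and simplifying $\E_\PP[X_T\mid X_o]=X_o$ in (P4)), so no further comment is needed.
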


\begin{proof}
To show \eqref{eq:Involution}, note that since $S(P)=\law_{\hat\PP}(Y)$, we obtain $S(S(P))=\law_{\bar{\PP}}(Z) $ for $Z := 1/Y$ and $\bar\PP := \frac{Y_T}{\E_{\hat \PP}[Y_T]} \hat \PP$.
Since $Z=1/(1/X)=X$ and 
\[ 
    \bar \PP = \frac{1/X_T}{\E_{\hat{\PP}}[1/X_T]}\hat{\PP} = \frac{1/X_T}{\E_\PP[X_T/(X_T\E_\PP[X_T])]} \frac{X_T}{\E_\PP[X_T]}\PP=\PP,
\]
we obtain $S(S(\PP))=\PP$. 

To show \eqref{eq:ProjOk}, let $R:=\max J$. For continuous bounded $f$ on $\D_J$ we have by the tower property
 \begin{align*}  
 \proj^{\D_J}_\# (S(\pi))(f)= \E_{\PP}\Big[ \frac{f\big((1/X_t)_{t\in J}\big)X_T}{\E_\PP[X_T]} \Big]
= \E_{\PP}\Big[ \frac{f\big((1/X_t)_{t\in J}\big)X_R}{\E_\PP[X_R]} \Big]=(S(\proj^{\D_J}_\# \pi))(f).
\end{align*}

Observe that combining \eqref{eq:Involution} and \eqref{eq:ProjOk} shows \eqref{eq:PlanBij}. Therefore it remains to prove \eqref{eq:CLMforCondLaw}. 
Note that given a continuous bounded function $f$ on $\R_{++}$ we obtain
\begin{align*}
    \law_{\hat \PP}(Y_T|Y_o)(f) &= \E_{\hat\PP}[f(Y_T)|Y_o]  
     = \frac{\E_{\PP} [X_Tf(1/X_T)|X_o]} {\E_\PP[X_T|X_o]} \\
    &= \frac{ \int x_T f(1/x_T) \, d\law_\PP(X_T|X_o)(x_T)}{\int x_T  \, d\law_\PP(X_T|X_o)(x_T)} = S(\law_\PP(X_T|X_o))(f),
\end{align*}
where we have used Bayes' rule and \eqref{eq:def.CLM-pi_unified}.
\end{proof}

\begin{remark}
In our opinion \eqref{eq:PlanBij} is a remarkable observation of \cite{CaDeMe23}. In particular it directly suggests the connection to the martingale transport problem.
\end{remark}
\begin{remark}\label{rmk:CLM_on_kernels}
    We can reformulate \eqref{eq:CLMforCondLaw} in terms of kernels which will be useful later on. 
    Consider $\pi\in\M_{\{0,1\}}(\mu,\nu)$ and let $\law_\PP((X_0,X_1))=\pi$, then we directly obtain from \eqref{eq:PlanBij} and \eqref{eq:CLMforCondLaw} that
    \begin{align}
        \label{eq:S(pi).disintegration}
        S(\pi)(dy_0,dy_1) = S(\mu)(dy_0) S(\pi_{1/y_0})(dy_1).
    \end{align}
\end{remark}

\begin{remark}
    In martingale transport, the pair $(\mu, \nu)$ is called \emph{irreducible} if for all measurable $A,B\subseteq \R, \mu(A), \nu(B)>0$ there exists $\pi \in \M_{\{0,1\}}(\mu, \nu)$ such that $\pi(A\times B)>0$. This condition often allows for cleaner results e.g.\ in the context of dual attainment \cite{BeNuTo16} or the description of optimizers \cite{BaBeHuKa20}. It is a natural assumption in the mathematical finance context, where it corresponds to call prices being strictly increasing w.r.t.\ time to maturity.
    From Theorem \ref{thm:first_properties_CLM} \eqref{eq:PlanBij} it follows that $(\mu, \nu) $ is irreducible if and only if $(S(\mu), S(\nu))$ is irreducible. 
    
    In the present one-dimensional case it is straightforward to see that $(\mu, \nu)  $ can be decomposed into countably many irreducible components (cf.\  \cite[Theorem A.4]{BeJu16}) and, again by Theorem \ref{thm:first_properties_CLM} \eqref{eq:PlanBij} it is plain that the irreducible components of $(\mu, \nu) $ correspond to the ones of $(S(\mu), S(\nu))$. 
\end{remark}

The next theorem covers the relation of the CN transformation and martingale optimal transport problems, as well as weak martingale optimal transport problems. For this reason, we define for measurable $c: \D_I\to [0,\infty]$ the function $S^*(c)$ by
\begin{align}
\label{eq:Costtrans}
S^* (c)(x):= x_T c(1/x), \quad x\in \D_I.
\end{align}
Furthermore, for measurable $C:\cP_1(\R_{++})\to[0,\infty]$, set $S^\ast(C)(\rho) :=  b(\rho) C( S(\rho) )$, $\rho \in \cP_1(\RR)$.

\begin{theorem}\label{thm:second_properties_clm}
    For the martingale optimal transport problem we have
    \begin{align}\label{eq:EquivalentMOT} \tag{P5}
        \frac{1}{b(\mu)}\inf\Big\{ \int S^*(c) \, d\pi: \pi \in \M_I(\mu, \nu)\Big\}=\inf\Big\{ \int c \, d\hat\pi: \hat\pi \in \M_I(S(\mu),S(\nu))\Big\},
    \end{align}
    and $\pi \in \scM_I(\mu,\nu)$ minimizes the left-hand side in \eqref{eq:EquivalentMOT} if and only if $S(\pi)$ minimizes the right-hand side.
    Similarly, for the weak martingale optimal transport problem we have    \begin{multline}\label{eq:EquivalentWMOT} \tag{P6}
        \frac{1}{b(\mu)}\inf\Big\{ \int S^*(C)(\pi_x) \, \mu(dx): \pi \in \M_{\{0,1\}}(\mu, \nu)\Big\} \\ = \inf\Big\{ \int C(\hat \pi_y) \, S(\mu)(dy): \hat\pi \in \M_{\{0,1\}}(S(\mu), S(\nu))\Big\},
    \end{multline}
    and $\pi \in \scM_{\{ 0,1\}}(\mu,\nu)$ minimizes the left-hand side in \eqref{eq:EquivalentWMOT} if and only if $S(\pi)$ minimizes the right-hand side. 
\end{theorem}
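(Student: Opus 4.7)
The plan is to pass everything through the bijection $S:\M_I(\mu,\nu)\to \M_I(S(\mu),S(\nu))$ established in \eqref{eq:PlanBij}, and reduce both identities to a pointwise rewriting of the integrand that uses only the definition \eqref{eq:def.CLM-pi_unified} of $S$ together with the martingale property $\int x_T\,\pi(dx)=b(\mu)$.

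For \eqref{eq:EquivalentMOT}, I would first fix $\pi\in\M_I(\mu,\nu)$ and set $\hat\pi:=S(\pi)$. Unpacking \eqref{eq:def.CLM-pi_unified} against $c$ gives
\begin{align*}
\int c\,d\hat\pi \;=\; \frac{\int c(1/x)\,x_T\,\pi(dx)}{\int x_T\,\pi(dx)} \;=\; \frac{1}{b(\mu)}\int S^*(c)(x)\,\pi(dx),
\end{align*}
where in the last step I use \eqref{eq:Costtrans} and the identity $\int x_T\,\pi(dx)=\E_\PP[X_T]=b(\mu)$ coming from the martingale property. Since $S$ is a bijection by \eqref{eq:PlanBij}, taking the infimum over $\pi\in\M_I(\mu,\nu)$ on both sides of this pointwise equality yields \eqref{eq:EquivalentMOT} together with the claimed correspondence of minimizers.

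For \eqref{eq:EquivalentWMOT}, I plan to use Remark~\ref{rmk:CLM_on_kernels}: if $\pi\in\M_{\{0,1\}}(\mu,\nu)$ and $\hat\pi=S(\pi)$, then $\hat\pi_{y_0}=S(\pi_{1/y_0})$ for $S(\mu)$-a.e.\ $y_0$. Applying the one-dimensional case of the previous computation with the constant cost function $1$ also shows that integration against $S(\mu)$ is a reweighted pushforward: for any measurable $f\geq 0$,
\begin{align*}
\int f(y_0)\,S(\mu)(dy_0) \;=\; \frac{1}{b(\mu)}\int f(1/x_0)\,x_0\,\mu(dx_0).
\end{align*}
Applying this to $f(y_0)=C(\hat\pi_{y_0})=C(S(\pi_{1/y_0}))$ and then invoking the martingale condition $b(\pi_{x_0})=x_0$ (which holds for $\mu$-a.e.\ $x_0$) together with the definition $S^*(C)(\pi_{x_0})=b(\pi_{x_0})\,C(S(\pi_{x_0}))$ gives
\begin{align*}
\int C(\hat\pi_{y_0})\,S(\mu)(dy_0) \;=\; \frac{1}{b(\mu)}\int x_0\,C(S(\pi_{x_0}))\,\mu(dx_0) \;=\; \frac{1}{b(\mu)}\int S^*(C)(\pi_{x_0})\,\mu(dx_0).
\end{align*}
A final appeal to the bijection \eqref{eq:PlanBij} on $\M_{\{0,1\}}$ turns this pointwise identity into \eqref{eq:EquivalentWMOT}, with the correspondence of minimizers following at once.

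The only step I expect to require care is the kernel-level identification $\hat\pi_{y_0}=S(\pi_{1/y_0})$: it is given by Remark~\ref{rmk:CLM_on_kernels}, but one should check that it holds outside a $S(\mu)$-null set rather than merely in distribution, so that substituting it under the $S(\mu)$-integral is legitimate. Everything else reduces to the two Bayes-type computations above and is essentially a book-keeping exercise once the martingale property has been used to evaluate the normalizing constants $\int x_T\,d\pi=b(\mu)$ and $b(\pi_{x_0})=x_0$.
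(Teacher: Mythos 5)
Your proposal is correct and follows essentially the same route as the paper: both reduce \eqref{eq:EquivalentMOT} and \eqref{eq:EquivalentWMOT} to a pointwise Bayes-type rewriting of the cost under $S$ (yours phrased via the definition \eqref{eq:def.CLM-pi_unified} and Remark~\ref{rmk:CLM_on_kernels}, the paper's via $\hat\PP$-expectations and \eqref{eq:CLMforCondLaw}, which are the same identity) and then invoke the bijection \eqref{eq:PlanBij} to pass to infima and minimizers. The kernel identification $\hat\pi_{y_0}=S(\pi_{1/y_0})$ $S(\mu)$-a.e.\ that you flag is exactly the content of \eqref{eq:S(pi).disintegration}, so no gap remains.
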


\begin{proof}
    To show \eqref{eq:EquivalentMOT}, note that by Bayes' rule we have
    \begin{align*}
        \E_{\hat \PP}[c(Y)]= \frac{\E_{\PP}[X_T c(1/X)]}{\E_\PP[X_T]} =\frac{1}{\E_\PP[X_T]} \E_{\PP } [S^*(c)(X)].
    \end{align*}
    Similarly, to obtain \eqref{eq:EquivalentWMOT} we use \eqref{eq:CLMforCondLaw} to obtain
    \[
        \E_{\hat \PP} [C(\law_{\hat \PP}(Y_1|Y_0))] = 
        \E_\PP \Big[  \frac{X_1}{\E_\PP[X_1]} C(S(\law_{\PP}(X_1|X_0))) \Big] = \frac{1} {\E_\PP[X_1]} \E_\PP[S^*(C)(\law_{\PP}(X_1|X_0))]. \qedhere
    \]
\end{proof}

We remark that most of the above properties \eqref{eq:Involution}-\eqref{eq:EquivalentWMOT} were already established in \cite[Lemma 2.3]{CaLaMa14} while the novelty lies in also covering the weak martingale optimal transport setting.

\section{Geometric stretched Brownian motion}\label{sec:AppgSBM}

\subsection{\gSBM{} as CN-transformation of \SBM}

If $\mu, \nu$ have finite second moment, then the optimization problem 
\begin{equation}\label{eq:SBMprimal}
    V^\SBM(\mu,\nu) := \sup \bigg\{ \E\Big[ \int_0^1 \sigma_t \, dt \Big] : dX_t = \sigma_t \, dB_t, X_0 \sim \mu, X_1 \sim \nu \bigg\}
\end{equation}
has a unique-in-law solution $X^{\SBM}=X^{\SBM, \mu, \nu}$. $X^{\SBM, \mu, \nu}$ or, depending on the context, its law $\QQ^{\SBM, \mu, \nu}$ is called \emph{stretched Brownian motion} 
from $\mu$ to $\nu$. Stretched Brownian motion is a strong-Markov martingale (\cite[Corollary 2.5]{BaBeHuKa20}) which admits a precise structural description in the spirit of Brenier's theorem, see Section \ref{sec:AppgSBMNumerics} below. 

In the investigation of \SBM{} it is key that $V^\SBM(\mu,\nu)$ is also the solution of the wMOT problem 
\begin{align}\label{eq:SBMprimalweak}
      V^\SBM(\mu,\nu)
      =
\sup_{\pi\in \scM_{\{0,1\}}(\mu, \nu)}\int \MCov(\pi_{x},\gamma_1) \, \mu(dx),
\end{align} 
where $\MCov(p_{1},p_{2}) := \sup_{q \in \Pi(p_{1},p_{2})} \int x_1 x_2 \, dq$ and $\gamma_1$ is the centered Gaussian with variance 1. Moreover, 
\eqref{eq:SBMprimalweak} has a unique solution $\pi^{\SBM, \mu, \nu}$ and $(X_t^{\SBM, \mu, \nu})_{t=0,1}\sim \pi^{\SBM, \mu, \nu} $.
On the other hand, it is also straightforward to reconstruct $X^{\SBM, \mu, \nu} $ from $\pi^{\SBM, \mu, \nu}$, see \cite[Theorem 2.2]{BaBeHuKa20}.

Problem \eqref{eq:SBMprimal} can be interpreted as maximizing the correlation with Brownian motion subject to marginal constraints. From a mathematical finance perspective it appears equally natural to maximize the correlation of the \emph{log return} with Brownian motion, e.g.\ to consider geometric stretched Brownian motions which solve 
\begin{equation}\label{eq:gSBMprimal}
    V^\gSBM(\mu,\nu) := \sup \bigg\{ \E\Big[ \int_0^1 \sigma_t \, dt \Big] : dY_t = \sigma_t Y_t \, dB_t, Y_0 \sim \mu, Y_1 \sim \nu \bigg\}.
\end{equation}

In the main result of this section we show that \eqref{eq:gSBMprimal} is equivalent to a wMOT \eqref{eq:gSBMprimalweak} which is the CN-transformation of the wMOT-characterization of \SBM{} in \eqref {eq:SBMprimalweak}. Consequently \gSBM s are precisely the (continuous-time) CN-transformations of \SBM s.

\begin{theorem} \label{thm:gSBM}
    Assume that $S(\nu)$ has finite second moment. Then 
    \begin{align}\label{eq:gSBMprimalweak}
        V^\gSBM(\mu,\nu) = \sup_{\pi\in \scM_{\{0,1\}}(\mu, \nu) }\int x \MCov( S(\pi_x) , \gamma_1 ) \, \mu(dx) \quad \left( = b(\mu)V^\SBM(S(\mu),S(\nu))\right).
    \end{align}
    Problem \eqref{eq:gSBMprimal} has a unique-in-law solution $\QQ^{\gSBM, \mu, \nu}$ (`\emph{geometric stretched Brownian motion}') and $\QQ^{\gSBM, \mu, \nu}$ is given by the continuous-time CN-transformation of the stretched Brownian motion from  $S(\mu)$  to $S(\nu)$, i.e.,
    \begin{align}
       \QQ^{\gSBM, \mu, \nu} = S \left(\QQ^{\SBM, S(\mu), S(\nu)}\right). 
    \end{align}

\end{theorem}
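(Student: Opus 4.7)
The plan is to split the double identity \eqref{eq:gSBMprimalweak} into its two equalities and then extract the structural claim from uniqueness of $\SBM$. The second equality, $\sup_\pi \int x\MCov(S(\pi_x),\gamma_1)\mu(dx) = b(\mu)V^\SBM(S(\mu),S(\nu))$, is essentially a direct consequence of Theorem \ref{thm:second_properties_clm}: apply \eqref{eq:EquivalentWMOT} with cost $C(\rho) := \MCov(\rho,\gamma_1)$ and marginals $(S(\mu),S(\nu))$, the sign flip turning $\inf$ into $\sup$ being immediate from the identical calculation. By \eqref{eq:SBMprimalweak}, the right-hand side becomes $V^\SBM(S(\mu),S(\nu))$; on the left, $S^*(C)(\pi_x)=b(\pi_x)\MCov(S(\pi_x),\gamma_1)=x\MCov(S(\pi_x),\gamma_1)$ because $\pi\in\M_{\{0,1\}}(\mu,\nu)$ is a martingale coupling, and \eqref{eq:PlanBij} furnishes a bijection between the optimizers on the two sides.

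For the first equality, $V^\gSBM(\mu,\nu)=b(\mu)V^\SBM(S(\mu),S(\nu))$, I would prove the two inequalities by a path-level CN-transformation of the SDEs. Given any competitor $Y$ for \eqref{eq:gSBMprimal} on $(\Omega,\F,\PP)$ with $dY_t=\sigma_t Y_t\,dB_t$, set $\hat\PP:=(Y_1/b(\nu))\PP$ and $X_t:=1/Y_t$. Itô's formula combined with Girsanov's theorem (the density $Y_t/b(\nu)$ has stochastic logarithm $\sigma_t\,dB_t$, so $\hat B_t:=B_t-\int_0^t\sigma_s\,ds$ is a $\hat\PP$-Brownian motion) yields $dX_t=-\sigma_t X_t\,d\hat B_t$; Theorem \ref{thm:first_properties_CLM} together with \eqref{eq:CLMforCondLaw} then identifies $X_0\sim S(\mu)$ and $X_1\sim S(\nu)$ under $\hat\PP$. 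The finite-second-moment assumption on $S(\nu)$, together with the convex order of $(S(\mu),S(\nu))$ inherited through \eqref{eq:PlanBij}, upgrades $X$ from a local to a true $\hat\PP$-martingale and thus makes it an admissible $\SBM$-candidate. Using $X_tY_t=1$ and the tower property,
\begin{equation*}
    \E_{\hat\PP}\!\left[\int_0^1 \sigma_t X_t\,dt\right]
    = \frac{1}{b(\nu)}\,\E_\PP\!\left[\int_0^1 \sigma_t X_t Y_1\,dt\right]
    = \frac{1}{b(\nu)}\,\E_\PP\!\left[\int_0^1 \sigma_t\,dt\right],
\end{equation*}
so the $\SBM$-objective of $X$ equals $b(\nu)^{-1}$ times the $\gSBM$-objective of $Y$. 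Taking suprema and invoking $b(\mu)=b(\nu)$ (since $\scM_{\{0,1\}}(\mu,\nu)\neq\emptyset$) yields ``$\le$''; the reverse follows by the symmetric construction $Y=1/X$ under $(X_1/b(S(\nu)))\PP$, since the CN-transformation is an involution.

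The structural identity $\QQ^{\gSBM,\mu,\nu}=S(\QQ^{\SBM,S(\mu),S(\nu)})$ and uniqueness-in-law of $\QQ^{\gSBM,\mu,\nu}$ then follow because the pathwise CN-map is a bijection on $\M_I$ and $\QQ^{\SBM,S(\mu),S(\nu)}$ is unique in law. The main obstacle I expect is rigorously justifying the change of measure and the true-martingale property of $X$ under $\hat\PP$: the density is $Y_1/b(\nu)$ rather than a Doléans--Dade exponential, so Girsanov must be invoked through the associated exponential martingale, and it is precisely the finite-second-moment hypothesis on $S(\nu)$ that supplies the integrability needed to rule out purely-local-martingale pathologies in the CN-transformed SDE.
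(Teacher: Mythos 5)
Your proposal is correct, and it uses the same basic toolkit as the paper (pathwise change of measure via Bayes/Girsanov, It\^o for $1/Y$, plus the known \SBM{} theory), but it organizes the proof differently. The paper first proves Lemma \ref{lem:CLM.MCov}, a \emph{conditional} dynamic-to-static identity for a fixed starting point, then combines it with the construction \eqref{eq:thm.gSBM.0} to get $V^{\gSBM}(\mu,\nu)=\sup_\pi\int x_0\,\MCov(S(\pi_{x_0}),\gamma_1)\,\mu(dx_0)$, passes to $(S(\mu),S(\nu))$ via the kernel-level CN relations \eqref{eq:CLMforCondLaw} and Remark \ref{rmk:CLM_on_kernels}, and finally identifies the optimizer through the characterization of \SBM{} in \cite[Theorem 1.5]{BaBeHuKa20}. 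You instead establish a direct value-preserving (up to the factor $b(\mu)=b(\nu)$) bijection between the \emph{dynamic} competitor classes of \eqref{eq:gSBMprimal} and of \eqref{eq:SBMprimal} for $(S(\mu),S(\nu))$, and obtain the static middle term of \eqref{eq:gSBMprimalweak} separately from Theorem \ref{thm:second_properties_clm} \eqref{eq:EquivalentWMOT} together with \eqref{eq:SBMprimalweak}; uniqueness of \gSBM{} and the identity $\QQ^{\gSBM,\mu,\nu}=S(\QQ^{\SBM,S(\mu),S(\nu)})$ then come from uniqueness-in-law of \SBM{}. This decomposition is legitimate and arguably cleaner, since it avoids the conditional construction; what the paper's route buys is that the optimality criterion ($\pi^\ast$ optimal for the static problem plus conditional attainment $\E[\int_0^1\sigma_t\,dt\,|\,X_0]=X_0\MCov(S(\pi^\ast_{X_0}),\gamma_1)$) comes out explicitly, matching the Bass-martingale description used later. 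Two small points to tighten: (i) the true $\hat\PP$-martingale property of $X=1/Y$ is automatic from Bayes' rule (Theorem \ref{thm:first_properties_CLM}), not a consequence of the second-moment hypothesis --- that hypothesis is what makes the \SBM{} problem for $(S(\mu),S(\nu))$ finite and guarantees existence/uniqueness via \cite{BaBeHuKa20}; (ii) to transfer uniqueness-in-law through your competitor bijection you should note that both objectives are functionals of the law alone (w.l.o.g.\ $\sigma_t\ge 0$, so $\sigma_t$ is recovered from the quadratic variation), so that maximizing \emph{laws} correspond under the involution $S$.
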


\subsection{Structure of \gSBM{} and numerical simulation}\label{sec:AppgSBMNumerics}

To obtain numerical simulations of SBM and, in turn, gSBM one uses that SBM admits a precise structural description.
Specifically, if $B$ denotes Brownian motion started in a probability $\alpha$ and $f: \R\to \R$ is such that $f(B_1)$ has finite second moment, then the martingale 
\[ M^{\alpha}_t:=\E[f(B_1)| B_t],\quad t\in [0,1] \] is called \emph{Bass martingale}.

Assume that $(\bar \mu, \bar \nu)$ is irreducible and that $\bar \nu$ has finite second moment.
Then \cite[Theorem 3.1]{BaBeHuKa20}, asserts that $X$ is the stretched Brownian motion from $\bar \mu $ to $\bar \nu$ if and only if $X$ is a Bass martingale. 
In this case the so called \emph{Bass measure} $\alpha$ is unique up to translation.

In order to simulate paths from a gSBM between $\mu$ and $\nu$ we can, by Theorem \ref{thm:gSBM}, simulate paths from a SBM between $S(\mu)$ and $S(\nu)$ and convert them. Furthermore, we can assume the pair $(\mu,\nu)$ to be irreducible, since otherwise we can simulate paths on each irreducible component.

Let us therefore recall how to find a Bass measure $\alpha$ for the SBM between $S(\mu)$ and $S(\nu)$, as well as how to simulate paths from a SBM. 
This is well described by the following diagram:
\begin{equation} \label{eq:diagram}
\begin{tikzcd}
    Y_0\sim\mu \arrow[r,"Y_t"]\arrow[d, leftrightarrow, "S"] & Y_1\sim\nu \arrow[d, leftrightarrow,"S"] \\
    X_0\sim S(\mu) \arrow[r, "X_t"] & X_1\sim S(\nu)\\
    B_0\sim\alpha \arrow[r, "B_t"]\arrow[u, "T_{\alpha,S(\mu)}"] & B_1\sim\alpha\ast\gamma_1 \arrow[u, "T_{\alpha\ast\gamma_1,S(\nu)}"]
\end{tikzcd}
\end{equation}
Here, we denote by $T_{\theta_1,\theta_2}$ the monotone transport map from $\theta_1$ to $\theta_2$ and by $\gamma_t$ the centered Gaussian with variance $t$.
Given a distribution $\alpha \in \mathcal P(\mathbb R_{++})$ and denoting by $\phi$ the density of the standard Gaussian, the diagram \eqref{eq:diagram} commutes if and only if $T_{\alpha,S(\mu)} = \phi\ast T_{\alpha\ast\gamma_1,S(\nu)}$, i.e.\ $\alpha$ is the corresponding Bass measure.
Hence, it remains to find a suitable $\alpha$ which can be achieved, for example, by using the fixed-point operator $\mathcal{A}:{\rm CDF}\to {\rm CDF}$, c.f.\ \cite{CoHe21,AcMaPa23,JoLoOb23},
\begin{equation}\label{eq:operator_cdf}
    \mathcal{A}F:=F_{S(\mu)}\circ(\phi\ast(Q_{S(\nu)}\circ(\phi\ast F))),
\end{equation}
where we denote by $F_\theta$ the cdf of the measure $\theta$ and by $Q_\eta$ the quantile function of the measure $\eta$.

The operator $\mathcal{A}$ admits a (up to translations) unique fixed point if $S(\mu)\leq_cS(\nu)$ are in convex order, the pair $(S(\mu),S(\nu))$ is irreducible and certain regularity properties are satisfied, c.f.\ \cite[Assumption 3.1]{AcMaPa23} for details. Assume those are satisfied, then the fixed point $F_\alpha$ is the cdf of the starting law $\alpha$ of the Brownian martingale and the above diagram commutes. 
This enables us to propose an algorithm for simulating paths from a gSBM.

\begin{algorithm}
\caption{A pseudocode for sampling from gSBM}\label{algo:pseudocode_gSBM}
\begin{algorithmic}
    \Require $\mu$ and $\nu$, starting cdf $F^{(0)}$
    \State Calculate $S(\mu)$ and $S(\nu)$
    \State Apply $\mathcal{A}$ starting with $F^{(0)}$ until converged leading $\alpha$ as starting cdf of SBM
    \State Calculate OT map $T_1$ from $\alpha\ast\gamma_1$ to $S(\nu)$
    \State Calculate maps for intermediate times $T_t:=T_1\ast\gamma_{1-t},~t\in[0,1]$
    \State Sample Brownian paths $(B_t)_t$
    \State Transform Brownian paths to SBM $(X_t)_t = (T_t(B_t))_t$
    \State Obtain samples of gSBM $(Y_t)_t$ from  samples of SBM $(X_t)_t$ via acceptance-rejection sampling
\end{algorithmic}
\end{algorithm}

\begin{figure}%[H]
   \centering
   \includegraphics[page=1,width=0.95\textwidth]
       {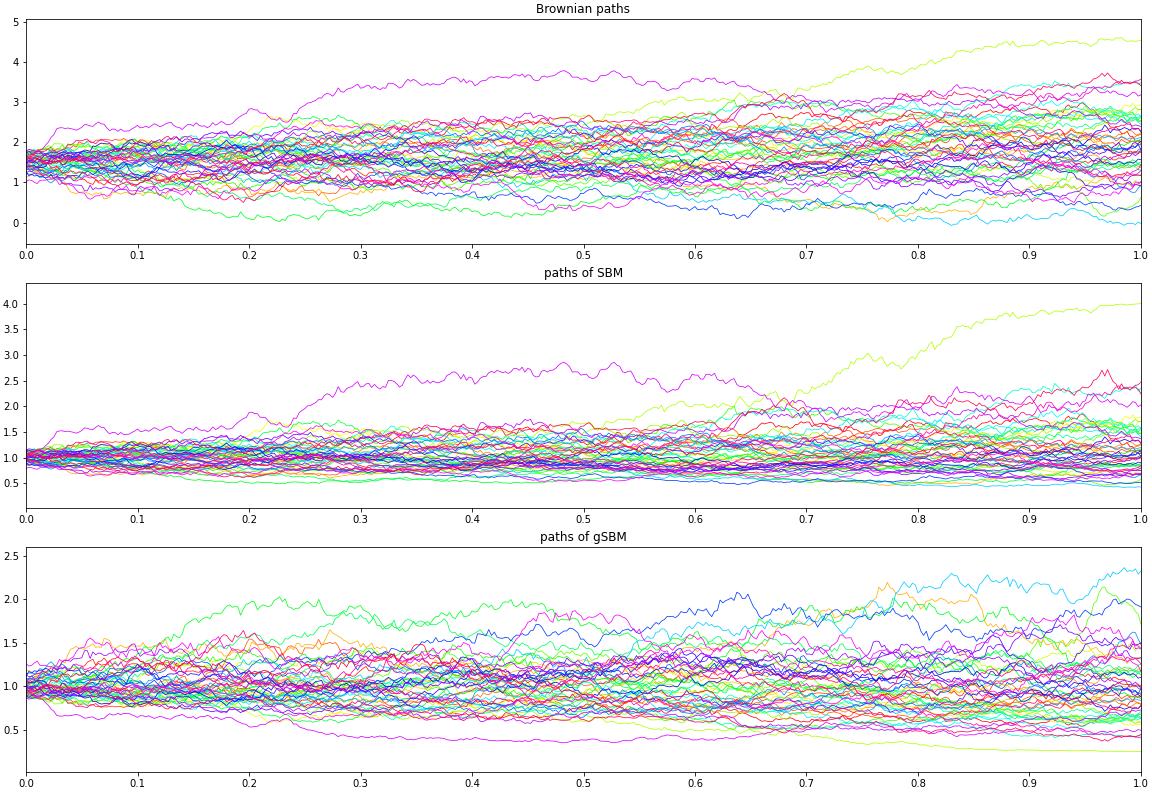} 
 \caption{\vspace{0cm} $50$ paths of BM between $\alpha$ and $\alpha\ast\gamma_1$, of SBM between $S(\mu)$ and $S(\nu)$, as well of gSBM between $\mu$ and $\nu$.}
 \label{fig:simulation}
\end{figure}

\subsection{Proofs}

Before giving the proof of Theorem \ref{thm:gSBM} we need a couple of preparations.
We start by relating the costs in \eqref{eq:gSBMprimal} and \eqref{eq:gSBMprimalweak}.

\begin{lemma}\label{lem:CLM.MCov}
    We have
    \begin{equation} \label{eq:lem.CLM.MCov}
        \sup \bigg\{ \E \Big[ \int_0^1 \sigma_t \, dt \Big] : Y_0 = b(\nu), Y_1 \sim \nu, dY_t = Y_t \sigma_t \, dB_t \bigg\} =
        b(\nu) {\rm MCov}(S(\nu), \gamma_1).
    \end{equation}
\end{lemma}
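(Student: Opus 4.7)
My plan is to apply a continuous-time CN-transformation to $Y$ in order to reduce the geometric problem on the left-hand side to the SBM problem $V^{\SBM}(\delta_{1/b(\nu)},S(\nu))$; the latter equals $\MCov(S(\nu),\gamma_1)$ because its first marginal is a Dirac, so the weak reformulation \eqref{eq:SBMprimalweak} immediately collapses.

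I would fix any feasible $Y$ on some $(\Omega,\F,\PP)$, set $X_t:=1/Y_t$, and change measure via $\hat\PP:=(Y_1/b(\nu))\PP$. By Theorem~\ref{thm:first_properties_CLM}, $X$ is a $\hat\PP$-martingale with $X_0=1/b(\nu)$ (which is deterministic, since $b(S(\nu))=1/b(\nu)$) and $X_1\sim S(\nu)$, so $X$ is feasible for $V^{\SBM}(\delta_{1/b(\nu)},S(\nu))$. Itô combined with Girsanov applied to $\tilde B_t:=B_t-\int_0^t\sigma_s\,ds$ (a $\hat\PP$-BM) gives $dX_t=-X_t\sigma_t\,d\tilde B_t$, so $X$ has $\hat\PP$-arithmetic volatility $\tilde\sigma_t:=X_t\sigma_t$, i.e.\ $\sigma_t=\tilde\sigma_t/X_t$.

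Next I would establish the key identity
\[
    \E_\PP\Big[\int_0^1\sigma_t\,dt\Big]=b(\nu)\,\E_{\hat\PP}\Big[\int_0^1\tilde\sigma_t\,dt\Big].
\]
Using $d\PP/d\hat\PP=b(\nu)/Y_1=b(\nu)X_1$ together with $\sigma_t=\tilde\sigma_t/X_t$, the left-hand side becomes $b(\nu)\,\E_{\hat\PP}[X_1\int_0^1\tilde\sigma_t/X_t\,dt]$; Fubini plus the martingale tower property $\E_{\hat\PP}[X_1\mid\F_t]=X_t$ reduces each pointwise expectation to $\E_{\hat\PP}[\tilde\sigma_t]$, yielding the claim. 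Since property (P3) of Theorem~\ref{thm:first_properties_CLM} makes the CN-map a bijection between the two feasible sets, taking suprema gives that the left-hand side of \eqref{eq:lem.CLM.MCov} equals $b(\nu)\,V^{\SBM}(\delta_{1/b(\nu)},S(\nu))$; by \eqref{eq:SBMprimalweak}, and since the only martingale coupling from a Dirac to $S(\nu)$ is the product, this equals $b(\nu)\,\MCov(S(\nu),\gamma_1)$.

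The main obstacle is the analytic bookkeeping: one must verify that $Y$ is a uniformly integrable true martingale so that the Radon-Nikodym density $Y_1/b(\nu)$ is legitimate and Girsanov applies, that $\sigma_t$ and $\tilde\sigma_t$ are pathwise well-defined via the quadratic variations $d\langle\log Y\rangle_t$ and $d\langle X\rangle_t$ (so that the cost is a functional of the law alone and the sup on each side is well-posed), and that the Fubini swap has the required integrability. These points are routine under the standing finite second moment assumption on $S(\nu)$, which provides the $L^2$ control on the $X$-side needed to bound $\int_0^1\tilde\sigma_t\,dt$ in $L^1$ via Cauchy--Schwarz against $\langle X\rangle_1$.
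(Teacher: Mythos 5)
Your proposal is correct and follows essentially the same route as the paper: a change of measure with density $Y_1/b(\nu)$ (equivalently $X_1/x_0$), Girsanov plus Itô to pass between the geometric dynamics of $Y$ and the arithmetic volatility of $X=1/Y$, the tower-property/Fubini identity relating the two objectives, the bijection of feasible classes via Theorem \ref{thm:first_properties_CLM} \eqref{eq:PlanBij}, and the collapse of the Dirac-start problem to ${\rm MCov}(S(\nu),\gamma_1)$. The only cosmetic differences are that you run the transformation from $Y$ to $X$ rather than from $X$ to $Y$ (immaterial, since $S$ is an involution) and that you phrase the last step through $V^{\SBM}(\delta_{1/b(\nu)},S(\nu))$ and \eqref{eq:SBMprimalweak} instead of writing the covariance $\E_\PP[X_1B_1]$ directly.
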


\begin{proof}
    We will use a classical change of measure argument:
    We fix a stochastic basis $(\Omega,\cF,\mathbb P, (\cF_t)_t)$ supporting a $\mathbb P$-Brownian motion $B$.
    In the first step, consider a positive $\mathbb P$-martingale $X$ starting in $x_0 > 0$ with $dX_t = X_t \sigma_t \, dB_t$ and $\sigma = (\sigma_t)_t$ is $(\cF_t)_t$-adapted.
    Since $x_0 = X_0 = \mathbb E_{\mathbb P}[X_1]$, the measure $\hat{\mathbb P}$ whose density is given by $\frac{d \hat{\mathbb P}}{d \mathbb P} = \frac{X_1}{x_0}$, is a probability measure equivalent to $\mathbb P$.
    By Girsanov's theorem \cite[Theorem 16.19]{Kall02} the process $\hat B = (\hat B_t)_t$ defined by
    \[
        \hat B_t := \int_0^t \frac{1}{X_s} \, d[X,B]_s - B_t = \int_0^t \frac{X_s \sigma_s}{X_s} \, ds - B_t = \int_0^t \sigma_s \, ds - B_t,
    \]
    is a local $\hat{\mathbb P}$-martingale with $[\hat B]_t = t$, thus, a $\hat{\mathbb P}$-Brownian motion.
    As $X$ is strictly positive, we can define $Y = (Y_t)_t$ by $Y_t = 1 / X_t$, from where we deduce by It\^o's formula the dynamics
    \[
        dY_t =  \frac{\sigma_t^2 X_t^2}{X_t^3} \, dt - \frac{\sigma_t X_t }{X_t^2} \, dB_t = \sigma_t Y_t ( \sigma_t dt - dB_t) = \sigma_t Y_t \, d\hat B_t.
    \]
    We observe that $\E_{\hat{\mathbb P}}[Y_1] = \frac{\E_{\mathbb P}[ X_1 Y_1 ]}{x_0} = \frac1{x_0}$ and $\E_{\hat{\mathbb P}}[Y_1 | \cF_t] = \frac{\E_{\mathbb P}[X_1 Y_1 | \cF_t]}{\E_{\mathbb P}[X_1 | \cF_t]} = Y_t$, $\hat{\mathbb P}$-a.s.\ for $t \in [0,1]$.
    In particular, $Y$ is a $\hat{\mathbb P}$-martingale.
    Therefore, we have
    \begin{equation}
        \label{eq:CLM.MCov}
        x_0 \E_{\hat{\mathbb P}}\bigg[ \int_0^1 \sigma_t \, dt \bigg] = \E_{\mathbb P}\bigg[ X_1 \int_0^1 \sigma_t \, dt \bigg] = \E_{\mathbb P} \bigg[ \int_0^1 X_t \sigma_t \, dt \bigg] = \E_{\mathbb P}[ X_1 B_1 ],
    \end{equation}
    where we used first the tower property and that both $X$ and $B$ are $\mathbb P$-martingales.

    Next, assume that additionally $x_0 = b(S(\nu)) = 1 / b(\nu)$ and $X_1 \sim S(\nu)$ under $\mathbb P$.
    By Theorem \ref{thm:first_properties_CLM} \eqref{eq:PlanBij} there is some $\pi \in \scM_{[0,1]}(\delta_{1 / x_0},\nu)$ such that $X \sim S(\pi)$ under $\mathbb P$.
    By construction, we find that $Y \sim S(S(\pi)) = \pi$ under $\hat{\mathbb P}$.

    Due to the preceding observation, we directly derive \eqref{eq:lem.CLM.MCov} from \eqref{eq:CLM.MCov}. 
\end{proof}

\begin{proof}[Proof of Theorem \ref{thm:gSBM}]
First we fix $\pi \in \scM_{\{0,1\}}(\mu,\nu)$.
Due to stability of optimal transport maps, it is well-known that the map $\mathcal P_2(\mathbb R) \ni \eta \mapsto Z^\eta \in L_2(\gamma_1)$ where $Z^\eta \sim \eta$ and $\mathbb E_{B_1 \sim \gamma_1}[ Z^\eta B_1] = {\rm MCov}(\eta,\gamma_1)$, is $(\mathcal W_2, L_2)$-continuous where $\mathcal W_2$ denotes the Wasserstein-2 distance on $\mathcal P_2(\mathbb R)$.
Consider the probability space $(\mathbb R^2, \mathcal B(\mathbb R^2),\mu \otimes \gamma_1)$ with the random variables $X_0(\omega_1,\omega_2) = \omega_1$ and $B_1(\omega_1,\omega_2) = \omega_2$ for $\omega = (\omega_1,\omega_2) \in \mathbb R^2$.
It easily follows that there exists a random variable $X_1 \in L_2(\mu \otimes \gamma_1)$ where a.s.\ $\law(X_1 | X_0) = \pi_{X_0}$ and
\[
    \mathbb E[X_1 B_1 | X_0] = {\rm MCov}(\pi_{X_0},\gamma_1),
\]
whence,
\begin{equation}
    \label{eq:thm.gSBM.0}
    \mathbb E[X_1B_1] = \int {\rm MCov}(\pi_{x_0}, \gamma_1)\, \mu(dx_0).
\end{equation}
Using Lemma \ref{lem:CLM.MCov} and \eqref{eq:thm.gSBM.0}, we compute 
    \begin{align}
        V^{\gSBM}(\mu,\nu) \label{eq:gSBM.static}
        &=
        \sup_{\pi \in \scM_{\{0,1\}}(\mu,\nu)} \int x_0 {\rm MCov}(S(\pi_{x_0}),\gamma_1) \, \mu(dx_0) \\ \nonumber
        &= \sup_{\tilde\pi \in \scM_{\{0,1\}}(S(\mu),S(\nu))} b(\mu)\int {\rm MCov}(\tilde\pi_{y_0}, \gamma_1) \, S(\mu)(dy_0) = b(\mu)V^{\SBM}(S(\mu),S(\nu)),
    \end{align}
    where the second-to-last equality is due to Theorem \ref{thm:first_properties_CLM} \eqref{eq:CLMforCondLaw} and Remark \ref{rmk:CLM_on_kernels}, and identifying $\tilde\pi = S(\pi)$.
    This yields the first assertion.

    Moreover, by the same reasoning it follows that a martingale $X$ (with $dX_t = X_t \sigma_t \, dB_t$ and law $\QQ$) minimizes \eqref{eq:gSBMprimal} if and only if $\pi^\ast := {\rm Law}(X_0,X_1) \in \scM_{\{0,1\}}(\mu,\nu)$ is optimal for the right-hand side in \eqref{eq:gSBM.static} and $\E[\int_0^1\sigma_t \, dt | X_0] = X_0{\rm MCov}(S(\pi_{X_0}),\gamma_1)$ a.s.
    Thus, invoking Theorem \ref{thm:first_properties_CLM}, Theorem \ref{thm:second_properties_clm}, and \cite[Theorem 1.5]{BaBeHuKa20} we find $S(\QQ) = \QQ^{\SBM,S(\mu),S(\nu)}$, which concludes the proof.
\end{proof}

\section{The CN-transformation and adapted Wasserstein distance}

\subsection{(geometric) adapted Wasserstein distance} 
In order to recall the definition of the adapted Wasserstein distance, let us first define the set of bicausal couplings. For this sake, consider $C[0,1]\times C[0,1]$, and the canonical processes $X,X'$, i.e.\ for $(\omega,\omega')\in C[0,1]\times C[0,1]$, consider for all $t\in[0,1]$
\begin{equation*}
    X_t(\omega,\omega')=\omega_t,\quad X'_t(\omega,\omega')=\omega'_t.
\end{equation*}
Given two measures $\QQ_1,\QQ_2\in\mathcal{P}(C[0,1])$ we say that a coupling $\pi\in\Pi(\QQ_1,\QQ_2)$ is causal from $\QQ_1$ to $\QQ_2$, if for $(X,X')\sim \pi$, it holds for all $t\in[0,1]$
\begin{equation}\label{eq:causality_constraint}
    \law((X'_s)_{s\in[0,t]}\mid (X_s)_{s\in[0,1]}) = \law((X'_s)_{s\in[0,t]}\mid (X_s)_{s\in[0,t]}).
\end{equation}
This says that under $\pi$ the conditional law of $X'$ up to some time $t$ given $X$ can only depend on the past of $X$, not on the future. A coupling $\pi\in\Pi(\QQ_1,\QQ_2)$ is then defined to be bicausal between $\QQ_1$ and $\QQ_2$ if it is both causal from $\QQ_1$ to $\QQ_2$, as well as causal from $\QQ_2$ to $\QQ_1$. This means that \eqref{eq:causality_constraint} as well as for all $t\in[0,1]$
\begin{equation}\label{eq:anticausality_constraint}
    \law((X_s)_{s\in[0,t]}\mid (X'_s)_{s\in[0,1]}) = \law((X_s)_{s\in[0,t]}\mid (X'_s)_{s\in[0,t]})
\end{equation}
hold. The set of all bicausal couplings between $\QQ_1$ and $\QQ_2$ is then denoted by $\Pi_{\rm bc}(\QQ_1,\QQ_2)$.
Having recalled the set of bicausal couplings, the adapted Wasserstein distance of martingale measures $\QQ_1, \QQ_2$ is defined as
\begin{align}
    \AW(\QQ_1, \QQ_2):= \inf_{\pi \in \Pi_{\rm bc}(\QQ_1,\QQ_2)} \bigg( \int |x_1 - y_1|^2 \, \pi(dx,dy) \bigg)^\frac12, 
\end{align}
see \cite{BaBaBeEd19a}.

Continuous time adapted Wasserstein distance is used to derive continuity of optimal stopping in \cite{AcBaZa20} and to derive stability properties of pricing, hedging and utility maximization in \cite{BaBaBeEd19a}. Remarkably, it satisfies Talagrand type inequalities with respect to specific entropy, see \cite{Fö22}. There is a rich literature on adapted versions of the Wasserstein distance in discrete time, see e.g.\ \cite{PfPi12, BaBePa21} and the references therein.

\subsection{\gSBM{} as metric projection}\label{sec:AppgSBMMetricProj}

It is known that \SBM{} can be defined as the metric projection of Brownian motion onto $\scM^C_{[0,1]}(\mu,\nu)$, the set of continuous martingales starting in $\mu$ and terminating in $\nu$,

with respect to adapted Wasserstein distance.

Specifically, denoting by $\mathbb W$ the Wiener measure on $C[0,1]$, we have
\begin{proposition}\label{prop:SBMproj}
    The stretched Brownian motion $\QQ^{\SBM,\mu,\nu}$ satisfies
    \begin{align}
        \QQ^{\SBM,\mu,\nu} = \argmin \{ \AW(\QQ, \mathbb W) : \QQ \in \scM^C_{[0,1]}(\mu, \nu)\}.
    \end{align}
\end{proposition}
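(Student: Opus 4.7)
The plan is to convert the projection problem into maximising a final-time bilinear form. Under any $\pi\in\cplbc(\QQ,\mathbb W)$ we have $X_1\sim\nu$ and $W_1\sim\gamma_1$, and expanding the square gives
\[
  \AW(\QQ,\mathbb W)^2 = \int y^2\,\nu(dy) + 1 - 2\sup_{\pi\in\cplbc(\QQ,\mathbb W)}\E_\pi[X_1W_1].
\]
The first two terms do not depend on $\QQ$, so it suffices to show $\sup_\pi\E_\pi[X_1W_1]\le V^{\SBM}(\mu,\nu)$ for every $\QQ\in\scM^C_{[0,1]}(\mu,\nu)$, with equality for $\QQ=\QQ^{\SBM,\mu,\nu}$.

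For the upper bound I would exploit the defining property of bicausal couplings: in the joint filtration $\cF_t:=\sigma(X_s,W_s:s\le t)$ the coordinate $X$ remains a continuous martingale and $W$ remains a standard Brownian motion. Integration by parts together with $W_0=0$ then yields $\E_\pi[X_1W_1]=\E_\pi[\langle X,W\rangle_1]$, and the Kunita--Watanabe inequality gives $\langle X,W\rangle_1 \le \int_0^1\sigma_t\,dt$ with $\sigma_t:=\sqrt{d\langle X\rangle^a_t/dt}$, where $\langle X\rangle^a$ denotes the absolutely continuous part of $\langle X\rangle$. When $\langle X\rangle$ is absolutely continuous, $X$ admits the representation $dX_t=\sigma_t\,dB_t$ on an enlargement of the space and is thus admissible in \eqref{eq:SBMprimal}, forcing $\E[\int_0^1\sigma_t\,dt]\le V^{\SBM}(\mu,\nu)$; the general case follows by a standard regularisation argument.

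To attain this bound at $\QQ=\QQ^{\SBM,\mu,\nu}$ I would use the Bass representation $X^{\SBM}_t=g_t(B_t)$ recalled in Section~\ref{sec:AppgSBMNumerics}, where $B$ is a Brownian motion started from the Bass measure $\alpha$, $g_1=f$ is the Bass map, and each $g_t$ is strictly increasing. Setting $\tilde B_t:=B_t-B_0$ produces a standard Brownian motion independent of $B_0$, and the joint law $\pi^\ast:=\law(X^{\SBM},\tilde B)$ has the correct marginals $\QQ^{\SBM,\mu,\nu}$ and $\mathbb W$. Bicausality holds because $X^{\SBM}|_{[0,t]}$ is a function of the independent pair $(B_0,\tilde B|_{[0,t]})$, while the identity $B_s=g_s^{-1}(X^{\SBM}_s)$ makes each $\tilde B_s=B_s-B_0$ measurable with respect to $\sigma(X^{\SBM}_u:u\le t)$ for $s\le t$. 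Conditioning on $X^{\SBM}_0=x$ one has $X^{\SBM}_1=f(g_0^{-1}(x)+\tilde B_1)$, a nondecreasing function of $\tilde B_1\sim\gamma_1$, so this conditional coupling realises $\MCov(\pi^{\SBM,\mu,\nu}_x,\gamma_1)$; integrating in $x$ and invoking \eqref{eq:SBMprimalweak} yields $\E_{\pi^\ast}[X^{\SBM}_1\tilde B_1]=V^{\SBM}(\mu,\nu)$.

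Combining the two parts identifies $\QQ^{\SBM,\mu,\nu}$ as a minimiser; uniqueness is inherited from uniqueness-in-law of the SBM, since any other minimiser would have to saturate the Kunita--Watanabe bound and therefore solve \eqref{eq:SBMprimal} itself. The main obstacles are the verification of bicausality for $\pi^\ast$---the filtrations of $X^{\SBM}$ and $\tilde B$ differ by $\sigma(B_0)$, so the conditional-law identities \eqref{eq:causality_constraint}--\eqref{eq:anticausality_constraint} must be checked directly---and the regularisation step needed to push the Kunita--Watanabe upper bound through the singular quadratic-variation case; the rest is routine Itô calculus.
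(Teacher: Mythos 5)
Your overall strategy is the right one, and it is essentially the argument behind the citation the paper relies on (the paper itself gives no proof but refers to \cite[Section 6]{BaBeHuKa20}): reduce the projection problem to maximizing $\E_\pi[X_1W_1]$ over bicausal couplings, bound this by $V^{\SBM}(\mu,\nu)$, and attain the bound by coupling \SBM{} with its driving Brownian motion. However, there are genuine gaps. The main one is the upper bound for \emph{general} $\QQ\in\scM^C_{[0,1]}(\mu,\nu)$: your chain $\E_\pi[X_1W_1]=\E_\pi[\langle X,W\rangle_1]\le \E[\int_0^1\sigma_t\,dt]$ is fine (granted the fact, which itself needs proof, that under a bicausal coupling $X$ stays a martingale and $W$ a Brownian motion in the joint filtration), but the decisive inequality $\E[\int_0^1\sigma_t\,dt]\le V^{\SBM}(\mu,\nu)$ is only justified by you when $\langle X\rangle$ is absolutely continuous, and the deferred ``standard regularisation argument'' for the singular case is not standard and is exactly where the work lies: smoothing or time-changing $\langle X\rangle$ changes $\int_0^1\sqrt{d\langle X\rangle^a_t/dt}\,dt$ in an uncontrolled way and perturbing the paths destroys the marginal constraints. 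The clean route avoids Kunita--Watanabe for the upper bound altogether: since $W$ is an $(\cF_t)$-Brownian motion under $\pi$, $W_1\sim\gamma_1$ is independent of $X_0$, so conditionally on $X_0=x$ the pair $(X_1,W_1)$ is \emph{some} coupling of $(\pi_x,\gamma_1)$ and hence $\E_\pi[X_1W_1]\le\int \MCov(\pi_x,\gamma_1)\,\mu(dx)\le V^{\SBM}(\mu,\nu)$ by \eqref{eq:SBMprimalweak}; this covers every continuous martingale regardless of the structure of $\langle X\rangle$. Your uniqueness sketch inherits the same problem: a putative minimizer with a singular quadratic-variation part is not a priori admissible in \eqref{eq:SBMprimal}, so ``saturating the Kunita--Watanabe bound'' does not immediately let you conclude it solves \eqref{eq:SBMprimal}; one must instead argue (as in \cite{BaBeHuKa20}) that equality forces the time-$\{0,1\}$ law to be the unique optimizer $\pi^{\SBM,\mu,\nu}$ of \eqref{eq:SBMprimalweak} and then identify the whole path law.

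A second, more minor gap is in the attainment step: the representation $X^{\SBM}_t=g_t(B_t)$ with strictly increasing $g_t$ is the Bass-martingale description, which the paper recalls only under irreducibility of the pair of marginals, an assumption absent from the proposition. This is repairable—either argue on irreducible components, or use the general construction of \SBM{} from the weak-transport optimizer (conditionally monotone map of $B_1-B_0$ given $X_0$, \cite[Theorem 2.2]{BaBeHuKa20}), for which your comonotonicity computation of $\E[X_1\tilde B_1\,|\,X_0]=\MCov(\pi^{\SBM,\mu,\nu}_{X_0},\gamma_1)$ goes through—but then the bicausality of the coupling must be reverified, since in the non-irreducible (or non-strictly-increasing) case $\tilde B|_{[0,t]}$ need not be a measurable function of $X^{\SBM}|_{[0,t]}$ and causality from $X^{\SBM}$ to $\tilde B$ has to be checked through the conditional-law identity \eqref{eq:causality_constraint} rather than by the inversion $B_s=g_s^{-1}(X^{\SBM}_s)$.
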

This is established in \cite[Section 6]{BaBeHuKa20}.

We can give an analogous characterization of \gSBM{} upon defining a suitable geometric adapted Wasserstein distance. 
For a martingale measure $\QQ$ on $C([0,1]; \RR)$ we denote by $L \QQ$ the law of the stochastic logarithm of $X \sim \QQ$. 
We then set for martingale laws $\QQ_1, \QQ_2 $ on $C([0,1]; \RR)$
\begin{align}
    g\AW(\QQ_1, \QQ_2):= \AW(L \QQ_1, L \QQ_2),
\end{align}
which is well-defined with values in $[0,\infty]$. We call $g\AW$ the \emph{geometric adapted Wasserstein distance}.

\begin{example}
    We calculate the geometric adapted Wasserstein distance between the laws of two geometric Brownian motions, i.e.\ for $\sigma, \sigma' \geq 0$ we consider 
    \begin{equation*}
        dX_t=\sigma X_tdB_t,\quad dX'_t=\sigma'X'_tdB'_t,\quad X_0= X_0'=1,
    \end{equation*}
     and call their laws $\QQ_1$ and $\QQ_2$. As $g\AW(\QQ_1,\QQ_2)=\AW(L\QQ_1,L\QQ_2)$ note that for 
    \begin{equation*}
        dZ_t= \sigma dB_t,\quad dZ'_t= \sigma' dB'_t,
    \end{equation*}
    we have $Z\sim L\QQ_1$ and $Z'\sim L\QQ_2$. From this we obtain using \cite[Example 3.4]{BaBaBeEd19a}
    \begin{equation*}
        g\AW(\QQ_1,\QQ_2) = \AW(L\QQ_1,L\QQ_2) = \big\lvert \sigma-\sigma'\big\rvert.
    \end{equation*}
    That is, the geometric adapted Wasserstein distance of geometric Brownian motions is exactly the difference of the volatilities of the respective log-returns.  
\end{example}

In analogy to SBM we define that the geometric stretched Brownian motion is the maximizer of 
\begin{equation}
    \label{eq:def.gSBM.MCov}
    \text{gSBM}_\text{corr}(\mu, \nu):=\max \bigg\{\E\Big[ \int_0^1 \frac{\sigma_t}{X_t} \, dt \Big]:dX_t=\sigma_tdB_t,~X_0\sim\mu,X_1\sim\nu\bigg\}.
\end{equation}
Note that we can also write this as in \eqref{eq:gSBMprimal}, i.e.\
\begin{equation*}
    \text{gSBM}_\text{corr}(\mu,\nu)=\max \bigg\{\E\Big[ \int_0^1 \tilde\sigma_t \, dt \Big]:dX_t=\tilde\sigma_tX_tdB_t,~X_0\sim\mu,X_1\sim\nu\bigg\}.
\end{equation*}
Similarly, we will show below that \eqref{eq:def.gSBM.MCov} is equivalent to the minimization problem
\begin{equation}
    \label{eq:def.gSBM.W2} \text{gSBM}_\text{dist}(\mu, \nu):=
    \min \bigg\{\E \Big[ \int_0^1 \Big|\frac{\sigma_t}{X_t} - 1\Big|^2 \, dt \Big]:dX_t=\sigma_tdB_t,~X_0\sim\mu,X_1\sim\nu\bigg\}.
\end{equation}

\begin{lemma} \label{lem:3.5}
    The optimization problems \eqref{eq:def.gSBM.MCov} and \eqref{eq:def.gSBM.W2} are equivalent in the sense that they have the same optimizers and 
    \[ \text{gSBM}_\text{dist}(\mu, \nu) = 1 - 2 \text{gSBM}_\text{corr}(\mu, \nu) - 2\int \log(x_1) \, \nu(dx_1) + 2\int \log(x_0) \, \mu(dx_0). \]
\end{lemma}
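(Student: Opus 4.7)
The plan is to expand the squared integrand in \eqref{eq:def.gSBM.W2} algebraically,
\[ \Big|\frac{\sigma_t}{X_t}-1\Big|^2 = \frac{\sigma_t^2}{X_t^2} - 2\frac{\sigma_t}{X_t} + 1, \]
integrate in $t$, and take expectations to obtain
\[ \E\Big[\int_0^1 \Big|\frac{\sigma_t}{X_t}-1\Big|^2 dt\Big] = \E\Big[\int_0^1 \frac{\sigma_t^2}{X_t^2} dt\Big] - 2\,\E\Big[\int_0^1 \frac{\sigma_t}{X_t} dt\Big] + 1. \]
The middle term is exactly the objective of \eqref{eq:def.gSBM.MCov}, so the task reduces to expressing the first expectation in terms of $\mu$ and $\nu$ alone.

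The key tool is It\^o's formula applied to $\log X_t$. Since $X$ is strictly positive with $dX_t = \sigma_t\,dB_t$, we have
\[ \log X_1 - \log X_0 = \int_0^1 \frac{\sigma_t}{X_t}\,dB_t - \frac12\int_0^1 \frac{\sigma_t^2}{X_t^2}\,dt. \]
Under the integrability implicit in \eqref{eq:def.gSBM.W2}, namely $\sigma/X \in L^2(dt\otimes d\PP)$, the stochastic integral is a true $L^2$-martingale and thus has zero mean. Taking expectations and using $X_0\sim\mu$, $X_1\sim\nu$ yields
\[ \E\Big[\int_0^1 \frac{\sigma_t^2}{X_t^2}\,dt\Big] = 2\int \log(x_0)\,\mu(dx_0) - 2\int \log(x_1)\,\nu(dx_1), \]
and substituting this into the earlier expansion produces the claimed identity.

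The equivalence of optimizers is then immediate: in the identity only the term $\text{gSBM}_\text{corr}(\mu,\nu)$ depends on the process, while all other quantities are constants determined by the marginals. Consequently minimizing $\text{gSBM}_\text{dist}$ over admissible processes and maximizing $\text{gSBM}_\text{corr}$ over admissible processes are the same optimization (up to an affine transformation with a negative leading coefficient on the variable term), so they have identical optimizers.

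The main obstacle I expect is justifying that $\int_0^1 (\sigma_t/X_t)\,dB_t$ is a genuine martingale rather than merely a local martingale, so that its expectation vanishes. This is precisely what permits the passage from It\^o's formula to the marginal identity. Fortunately, the formulation \eqref{eq:def.gSBM.W2} is only meaningful for admissible $(\sigma,X)$ for which the displayed expectation is finite; expanding the square shows this forces $\E\!\int_0^1 (\sigma_t/X_t)^2\,dt<\infty$, which is exactly the $L^2$-condition securing the martingale property. If desired, one can proceed more cautiously by localizing with stopping times $\tau_n\uparrow 1$ making the stochastic integral a martingale, applying the argument on $[0,\tau_n]$, and passing to the limit via monotone convergence on the $dt$-integral and dominated convergence on the boundary terms using integrability of $\log x$ against $\mu$ and $\nu$ (which holds under the standing assumption that $S(\nu)$ has finite second moment, since this implies $\nu$-integrability of $\log$).
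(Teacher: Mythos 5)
Your proof is correct and follows essentially the same route as the paper: the paper likewise writes $dX_t=\tilde\sigma_t X_t\,dB_t$, uses the stochastic-exponential (equivalently, It\^o on $\log X$) identity $\log X_1-\log X_0=\int_0^1\tilde\sigma_t\,dB_t-\tfrac12\int_0^1\tilde\sigma_t^2\,dt$, expands the square, and observes that the remaining term depends only on the marginals. Your additional discussion of why the stochastic integral has zero mean (the $L^2$-bound forced by finiteness of the objective, or localization) only makes explicit a step the paper leaves implicit.
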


\begin{proof}
    Let $X = (X_t)_t$ be a $\mathbb P$-martingale satisfying the SDE $dX_t = \tilde\sigma_t X_t \, dB_t$ where $B$ is a $\mathbb P$-Brownian motion.
    By \cite[Theorem 23.8]{Kall02} the stochastic exponential is the unique solution to the aforementioned SDE, hence, 
    \[ X_t = X_0 \exp\Big( \int_0^t \tilde\sigma_s \, dB_s - \frac{1}{2}\int_0^t \tilde\sigma_s^2 \, ds \Big). \]
    Consequently, we have
    \begin{equation}
        \label{eq:stochastic exponential}
        \log(X_1) - \log(X_0) = \int_0^1 \tilde\sigma_t \, dB_t - \frac{1}{2}\int_0^1 \tilde\sigma_t^2 \, dt.
    \end{equation}

    Using \eqref{eq:stochastic exponential} we find
    \begin{align*}
        \E\Big[ \int_0^1 (1 - \tilde\sigma_t)^2 \, dt \Big] &= 1 - 2 \E\Big[ \int_0^1 \tilde\sigma_t \, dt \Big] + \E\Big[ \int_0^1 \tilde\sigma_t^2 \,dt \Big]
        \\
        &= 1 - 2 \E\Big[ \int_0^1 \tilde\sigma_t \, dt \Big] + 2\E[ -\log(X_1) + \log(X_0) ].
    \end{align*}
    We observe that the last expected value just depends on the initial and terminal distribution of $X$.
    Consequently, minimizing \eqref{eq:def.gSBM.W2} over all continuous martingales $X$ with $X_0 \sim \mu$ and $X_1 \sim \nu$ is equivalent to maximizing \eqref{eq:def.gSBM.MCov} over the same class.
\end{proof}

Write $\mathbb W_{g}$ for the law of geometric Brownian motion. Then we have
\begin{theorem} \label{thm:logprojection}
    Assume that $S(\nu)$ has finite second moment. Then, $\QQ^{gSBM,\mu,\nu}$ is the unique minimizer of
    \begin{align}
        \min \{g\AW(\QQ, \mathbb W_{g}) : \QQ \in \scM^C_{[0,1]}(\mu, \nu)\}.
    \end{align}
\end{theorem}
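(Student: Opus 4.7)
The plan is to give an explicit expression for $\AW(L\QQ, \mathbb{W})$ and then invoke \Cref{lem:3.5} and \Cref{thm:gSBM}.

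First, since $\mathbb W_g$ is the law of $Y$ with $dY_t = Y_t \, dB_t$, its stochastic logarithm is $B$ itself, so $L\mathbb W_g = \mathbb W$. For $\QQ \in \scM^C_{[0,1]}(\mu,\nu)$ with canonical process $Y$, strict positivity together with the martingale representation theorem (on a suitable enlargement) allows us to write $dY_t = Y_t\sigma_t\, dB_t$, from which $Z := LY$ satisfies $dZ_t = \sigma_t\, dB_t$ and $Z_0 = 0$. Thus $L\QQ$ is the law of a continuous martingale starting at $0$ whose diffusion coefficient is the log-return volatility of $Y$ (precisely the quantity $\sigma_t/X_t$ appearing in $\gSBM_{\rm dist}$ of \Cref{lem:3.5}).

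The central step is the identity
\begin{equation*}
    \AW(L\QQ,\mathbb{W})^2 = \E\bigg[\int_0^1 (\sigma_t - 1)^2\, dt\bigg]. \tag{$\ast$}
\end{equation*}
The upper bound in $(\ast)$ follows from the synchronous coupling $(Z,B)$ on the underlying filtered space, which is trivially bicausal and satisfies $Z_1 - B_1 = \int_0^1(\sigma_t-1)\, dB_t$; It\^o's isometry then delivers the stated value. For the matching lower bound, given $\pi \in \Pi_{\rm bc}(L\QQ,\mathbb{W})$ I would expand
\[
    \E_\pi[(Z_1 - B'_1)^2] = \E\bigg[\int_0^1 \sigma_t^2\, dt\bigg] + 1 - 2\E_\pi\big[\langle Z, B'\rangle_1\big],
\]
and exploit bicausality to obtain the representation $d\langle Z, B'\rangle_t = \sigma_t\rho_t\, dt$ with adapted $|\rho_t|\le 1$. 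Taking $\sigma\ge 0$ without loss of generality (flip the sign of $B$), the Kunita--Watanabe bound gives $\E_\pi[\langle Z,B'\rangle_1] \le \E[\int_0^1 \sigma_t\, dt]$, with equality at the synchronous coupling, which yields $(\ast)$.

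Once $(\ast)$ is established, minimising $\AW(L\QQ,\mathbb{W})$ over $\QQ\in\scM^C_{[0,1]}(\mu,\nu)$ is exactly the problem $\gSBM_{\rm dist}(\mu,\nu)$ from \Cref{lem:3.5}; by that lemma its optimizers coincide with those of $V^{\gSBM}(\mu,\nu)$, which by \Cref{thm:gSBM} admits the unique-in-law solution $\QQ^{\gSBM,\mu,\nu}$. The main technical obstacle will be the lower bound in $(\ast)$: translating the abstract bicausality constraints \eqref{eq:causality_constraint}--\eqref{eq:anticausality_constraint} into the adapted, $L^\infty$-bounded correlation process $\rho$ requires a careful enlargement-of-filtration argument. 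This is the direct continuous-time analogue of the step carried out to establish \Cref{prop:SBMproj} in \cite[Section 6]{BaBeHuKa20}, and I expect to adapt that argument rather than reprove it from scratch.
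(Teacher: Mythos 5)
Your route differs from the paper's: you aim for the per-$\QQ$ identity $(\ast)$, $\AW(L\QQ,\mathbb W)^2=\E[\int_0^1(\sigma_t-1)^2dt]$, and then conclude via Lemma \ref{lem:3.5} and Theorem \ref{thm:gSBM}, whereas the paper only proves the Kunita--Watanabe \emph{inequality} for every $\QQ$ and imports attainment from \cite[Proposition 6.4]{BaBeHuKa20} for the SBM problem between $S(\mu)$ and $S(\nu)$, transferring it through the CN-bijection. Your closing reduction is fine, but the upper bound in $(\ast)$ rests on the claim that the synchronous coupling $(Z,B)$ is ``trivially bicausal'', and that is false in general. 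Causality from $L\QQ$ to $\mathbb W$ requires the past of $B$ to be conditionally independent of the future of $Z$ given the past of $Z$; this fails as soon as $\sigma$ may vanish. Concretely, take $\sigma_t=0$ for $t\le\tfrac12$ and $\sigma_t=\mathbbm{1}_{\{B_{1/2}>0\}}$ for $t>\tfrac12$: at $t=\tfrac12$ the past of $Z$ is trivial, yet the whole path of $Z$ reveals the sign of $B_{1/2}$, so $\law((B_s)_{s\le 1/2}\mid Z)\neq\law((B_s)_{s\le 1/2}\mid (Z_s)_{s\le 1/2})$ and the coupling is not bicausal. (If $\sigma>0$ a.e.\ the claim is fine, since then the past of $B$ is a functional of the past of $Z$; but competitors $\QQ$ with vanishing log-volatility cannot be excluded.) To salvage $(\ast)$ one must build the representing Brownian motion as $W_t=\int_0^t\mathbbm{1}_{\{\sigma_s>0\}}\sigma_s^{-1}dZ_s+\int_0^t\mathbbm{1}_{\{\sigma_s=0\}}dW'_s$ with $W'$ independent, and verify bicausality (Lévy's characterization in the joint filtration for one direction; ``past of $W$ $=$ functional of past of $Z$ plus independent noise'' for the other) --- exactly the kind of argument you defer to \cite[Section 6]{BaBeHuKa20}, but it is the upper bound, not the lower bound, where this work sits. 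On the lower-bound side you in fact locate the difficulty in the wrong place: the representation $d\langle Z,B'\rangle_t=\sigma_t\rho_t\,dt$ with $|\rho_t|\le1$ is just Kunita--Watanabe and needs no bicausality; what bicausality is really needed for there is that both coordinates remain martingales in the joint filtration, which is what justifies replacing $\E_\pi[Z_1B'_1]$ by $\E_\pi[\langle Z,B'\rangle_1]$ --- a step you use silently.

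A second, smaller gap: you apply $(\ast)$ and Lemma \ref{lem:3.5} to every $\QQ\in\scM^C_{[0,1]}(\mu,\nu)$, but elements of this set need not have absolutely continuous quadratic variation, hence need not admit any representation $dY_t=Y_t\sigma_t\,dB_t$; such laws have to be treated separately (e.g.\ by showing that a singular part of $\langle LY\rangle$ only increases $\AW(L\QQ,\mathbb W)$, as in \cite{BaBeHuKa20}). The paper sidesteps both issues by proving only the one-sided bound \eqref{eq:inequality_for_gsbm} for arbitrary competitors and invoking the cited attainment result; if you repair the bicausal construction (or restrict attainment to the single candidate $\QQ^{\gSBM,\mu,\nu}$, for which the Bass-martingale structure of its CN-preimage makes the synchronous coupling genuinely bicausal), your argument goes through and is in fact somewhat more self-contained, since it yields the clean identity $(\ast)$ rather than only the value of the infimum.
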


\begin{proof}
    Let first $dX_t = \sigma_tX_tdB_t$ with $X_0\sim\mu, X_1\sim\nu$ and $\sigma_t \ge 0$. Furthermore, denote $\mathbb Q=\law(X)$. Note that 
    \begin{equation}\label{eq:sde_stoch_log}
        d\tilde X_t=\sigma_tdB_t,\quad\tilde X_t:=(LX)_t
    \end{equation}
    with $LX$ denoting the stochastic logarithm of $X$ and also $\law(\tilde{X})=L\mathbb{Q}$.

    Let us now show that
    \begin{equation}\label{eq:eq_to_show_Thm42}
        \E\Big[\int_0^1\sigma_tdt\Big] \geq \sup\limits_{\pi\in\cplbc(L\mathbb Q,\mathbb W)}\E_\pi[\tilde X_1W_1].
    \end{equation}

    We obtain this by estimating for a bicausal coupling $\pi\in\cplbc(L\mathbb Q,\mathbb W)$
    \begin{align*}
        \E_\pi[\tilde X_1W_1] = \E_\pi[\langle \tilde X\,, W\rangle_1] &= \E_\pi\Big[\int_0^1 d\langle \tilde X\,,W\rangle_t\Big] =\E_\pi\Big[\int_0^1\sigma_t d\langle B\,,W\rangle_t\Big] \\
        &\leq \E_\pi\Big[\Big(\int_0^1\sigma_td\langle B\rangle_t \int_0^1\sigma_t d\langle W\rangle_t\Big)^{1/2}\Big] = \E_\pi\Big[\int_0^1\sigma_tdt\Big]=\E[\langle \tilde X\,,B\rangle_1],
    \end{align*}
    where the inequality is due to the Kunita-Watanabe inequality. We note that the last term does not depend on the coupling $\pi$ anymore but only on the marginal law, i.e.\ $\law(\tilde X) = L\mathbb Q.$
    This shows \eqref{eq:eq_to_show_Thm42}.

    Recalling the definition of the adapted Wasserstein distance, we obtain
    \begin{equation*}
        \AW(L\mathbb Q,\mathbb W)^2 = \inf_{\pi \in \cplbc(L\mathbb Q, \mathbb W)}\E_{\pi}[\langle \tilde X - W\rangle_1] = \E[\langle \tilde X\rangle_1] + 1 - 2\sup_{\pi \in \cplbc(L\mathbb Q, \mathbb W)}\E_{\pi}[\langle \tilde X\,,W\rangle_1]
    \end{equation*}

    Using \eqref{eq:eq_to_show_Thm42} we get
    \begin{equation}\label{eq:inequality_for_gsbm}
        g\AW(\mathbb Q,\mathbb W_g)^2 = \AW(L\mathbb Q,\mathbb W)^2\geq-2\E\Big[\int_0^1\sigma_tdt\Big] + \E[\langle \tilde X\rangle_1] + 1.
    \end{equation}

    By \cite[Proposition 6.4]{BaBeHuKa20} we have
    \begin{equation}\label{eq:equality-sbm-attainment}
        \inf\limits_{\tilde{\mathbb Q}\in\scM^C_{[0,1]}(S(\mu),S(\nu))}\AW(\tilde{\mathbb Q},\mathbb W)^2 = -\max\limits_{\substack{d\tilde{X}_t=\sigma_tdB_t\\\tilde{X}_0\sim S(\mu),\tilde{X}_1\sim S(\nu)}}2\E\Big[\int_0^1\sigma_tdt\Big] +\E[\langle\tilde{X}\rangle_1]+1,
    \end{equation}
    with attainment, i.e.\ there is an optimizer.
    We can now use the bijection from Theorem \ref{thm:first_properties_CLM}, \eqref{eq:inequality_for_gsbm} and \eqref{eq:eq_to_show_Thm42} to arrive at
    \begin{equation}
        \inf\limits_{\mathbb Q\in \scM^C_{[0,1]}(\mu,\nu)}g\AW(\mathbb Q,\mathbb W_g)^2 = -\max\limits_{\substack{dX_t=\sigma_tX_tdB_t\\ X_0\sim \mu,X_1\sim\nu}}2\E\Big[\int_0^1\sigma_tdt\Big] + \E[\langle \tilde X\rangle_1] + 1.
    \end{equation}
    Since, the geometric stretched Brownian motion is defined as the law of the (unique in law) maximizer of
    \begin{align*}
        \max\bigg\{\E\Big[\int_0^1\sigma_tdt\Big]: dX_t=\sigma_tX_tdB_t,~X_0\sim\mu,~X_1\sim\nu\bigg\},
    \end{align*}
    we get by the preceding observations that it is also the unique minimizer w.r.t.\ the geometric Wasserstein distance against the law of geometric Brownian motion, concluding the proof.
\end{proof}

\section{CN-transformation for continuous-time MOT problems}\label{Sec:AppHT}

In this section we investigate the CN-transformation for continuous martingales and linear cost functions, complementing Theorems \ref{thm:first_properties_CLM} and \ref{thm:second_properties_clm}.
Let $h : [0,1] \times \RR \times \R_+ \to \R$ be a measurable function.
We consider the minimization problem
\begin{equation}
    \label{eq:trevisan huesmann}
    \inf \E \Big[ \int_0^1 h\Big(t,X_t,\frac{\sigma_t}{X_t}\Big) \, dt \Big],
\end{equation}
where the infimum is taken over all continuous martingales $X$ with $X_0\sim \mu, X_1\sim \nu$ and absolutely continuous quadratic variation $d\langle X\rangle_t =: \sigma_t^2 \, dt$.

Recall that $\scM_{[0,1]}^{\rm AC}$ is precisely the set of laws of all martingales with absolutely continuous quadratic variation process. 
By \cite{Ka83} there is a measurable function $\sigma : [0,1] \times C[0,1] \to \R_+$ such that $\int_0^t \sigma_s^2(X) \, ds = \langle X\rangle_t$ $\pi$-almost surely, for every $\pi \in \scM_{[0,1]}^{\rm AC}$.
Then, \eqref{eq:trevisan huesmann} can be equivalently reformulated using laws
\begin{equation}
    \label{eq:trevisan huesmann couplings}
    \mathcal{W}^h(\mu,\nu) := \inf_{\pi \in \scM_{[0,1]}^{\rm AC}(\mu,\nu) } \int_{C[0,1]} c_h(x) \, \pi(dx),
\end{equation}
where $c_h$ is given by
\[
    c_h(x) := \int_0^1 h\Big(t,x_t,\frac{\sigma_t(x)}{x_t}\Big) \, dt.
\]
In order to better understand what happens with \eqref{eq:trevisan huesmann couplings} under change of numeraire, note that by
\eqref{eq:Costtrans} the transformed cost is
\[
    S^\ast(c_h)(x) = x_1 \int_0^1 h\Big(t, \frac{1}{x_t},x_t \sigma_t\Big(\frac{1}{x}\Big)\Big) \, dt.
\]
By It\^o's formula we have that $\frac{\sigma_t(x)}{x_t} = x_t \sigma_t(1 / x)$ $\pi$-almost surely for every $\pi \in \scM_{[0,1]}^{\rm AC}$.
Let us define
\begin{equation}\label{eq:little_s_star}
    s^\ast (h)\Big(t,x_t,\frac{\sigma_t(x)}{x_t}\Big):= x_th\Big(t,\frac{1}{x_t},x_t\sigma_t\Big(\frac{1}{x}\Big)\Big),
\end{equation}
so that we get
\begin{theorem}[CN-Transformation in continuous time] \label{thm:trevisan+huesmann.CLM}    For $\pi \in \scM_{[0,1]}^{\rm AC}(\mu,\nu)$ we have
    \begin{equation}
       \int_{C[0,1]} \int_0^1 s^\ast(h)\Big(t,x_t,\frac{\sigma_t(x)}{x_t}\Big) \, dt \, \pi(dx) =
        b(\mu)\int_{C[0,1]} \int_0^1 h\Big(t,y_t,\frac{\sigma_t(y)}{y_t}\Big) \,dt \, S(\pi)(dy),
    \end{equation}
    given that one of the sides is well-defined.
    In particular, $\pi^\ast \in \scM_{[0,1]}^{\rm AC}(\mu,\nu)$ is optimizer of $\mathcal W^{s^\ast(h)}(\mu,\nu)$ if and only if $S(\pi^\ast)$ is optimizer of $\mathcal W^h(S(\mu),S(\nu))$.
\end{theorem}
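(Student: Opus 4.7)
The plan is to argue in the spirit of the proofs of \eqref{eq:EquivalentMOT}--\eqref{eq:EquivalentWMOT}: fix a stochastic basis $(\Omega,\F,\PP,(\F_t)_t)$ carrying a process $X\sim\pi$ and set $\hat\PP := \frac{X_T}{b(\mu)}\PP$, $Y:=1/X$. By the discussion preceding Theorem \ref{thm:first_properties_CLM} we have $\law_{\hat\PP}(Y)=S(\pi)$, and the identity to prove becomes an equality between two $\PP$-expectations linked by Bayes' rule.

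First I would unfold the RHS via change of measure and the identities $(1/X)_t=1/X_t$ and $\sigma_t(1/X)/(1/X_t)=X_t\sigma_t(1/X)$:
\[
b(\mu)\int\!\int_0^1 h(t,y_t,\sigma_t(y)/y_t)\,dt\,S(\pi)(dy)
= \E_{\PP}\Big[X_T\!\int_0^1 h(t,1/X_t,X_t\sigma_t(1/X))\,dt\Big].
\]
Under the standing assumption that one side is well-defined, Fubini lets me push the expectation inside the time integral. For each fixed $t$, the integrand $h(t,1/X_t,X_t\sigma_t(1/X))$ is $\F_t$-measurable: $X_t$ is a coordinate, and the Karandikar-type progressive construction of the volatility functional $\sigma$ combined with the pointwise-in-time path inversion $x\mapsto 1/x$ (a homeomorphism of $\RR$) ensures that $\sigma_t(1/X)$ depends only on $(X_s)_{s\le t}$. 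The martingale property of $X$ under $\PP$ then gives, via the tower property,
\[
\E_\PP\big[X_T\,h(t,1/X_t,X_t\sigma_t(1/X))\big]=\E_\PP\big[X_t\,h(t,1/X_t,X_t\sigma_t(1/X))\big].
\]

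Reassembling with Fubini and using the definition \eqref{eq:little_s_star} of $s^\ast(h)$ together with the It\^o-formula identity $\sigma_t(X)/X_t=X_t\sigma_t(1/X)$ $\PP$-a.s.\ (noted in the excerpt), the right-hand side becomes
\[
\E_\PP\Big[\int_0^1 s^\ast(h)(t,X_t,\sigma_t(X)/X_t)\,dt\Big]
= \int\!\int_0^1 s^\ast(h)(t,x_t,\sigma_t(x)/x_t)\,dt\,\pi(dx),
\]
which is the LHS. The ``in particular'' statement is then immediate: Theorem \ref{thm:first_properties_CLM} \eqref{eq:PlanBij} provides a bijection $S\colon\scM_{[0,1]}(\mu,\nu)\to\scM_{[0,1]}(S(\mu),S(\nu))$, and applying It\^o's formula to $Y=1/X$ shows $\langle Y\rangle$ is absolutely continuous iff $\langle X\rangle$ is, so the bijection restricts to $\scM_{[0,1]}^{\rm AC}$; the established identity then means the objective functionals agree up to the multiplicative constant $b(\mu)$, so minimizers correspond.

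The main obstacle I anticipate is purely bookkeeping: rigorously verifying the $\F_t$-measurability of $x\mapsto\sigma_t(1/x)$ (via path inversion preserving the progressive $\sigma$-algebra) and the preservation of absolute continuity of the quadratic variation under $S$. Once these pathwise facts are noted, the rest reduces to Bayes, Fubini, and the tower property, exactly as in the proof of Theorem \ref{thm:second_properties_clm}.
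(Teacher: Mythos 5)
Your proposal is correct and follows essentially the same route as the paper's proof: both rest on the change of measure $\hat\PP=\frac{X_1}{b(\mu)}\PP$ with $Y=1/X\sim S(\pi)$, the It\^o-formula identity $\sigma_t(X)/X_t=X_t\sigma_t(1/X)$, and a Fubini/tower-property step using the martingale property of $X$ to exchange $X_1$ for $X_t$ inside the time integral (the paper writes the same computation in the opposite direction, from the $s^\ast(h)$-side to the $h$-side). Your extra remarks on the progressive measurability of $\sigma_t(1/X)$ and on $S$ preserving absolute continuity of the quadratic variation are compatible with, and slightly more explicit than, the paper's treatment.
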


\begin{proof}
    We use the same notation as in the proof of Theorems \ref{thm:first_properties_CLM} and \ref{thm:second_properties_clm}.
    Let $X \sim \pi \in \scM_{[0,1]}^{\rm AC}(\mu,\nu)$ and $Y = \frac1X$.
    By It\^o's formula we have (since $\inf\limits_tX_t > 0$, a.s.)
    \[
        dY_t = -\frac{dX_t}{X_t^2} + \frac{d\langle X\rangle_t}{X_t^3}.
    \]
    Thus, the quadratic variation of $Y$ satisfies $d\langle Y\rangle_t = \frac{d\langle X\rangle_t}{X_t^4}$, from where we derive the relation
    \begin{equation}
        \label{eq:quadratic variation}
        \frac{\sigma_t^2(Y)}{Y_t^2} = \frac{d\langle Y\rangle_t}{Y^2_t} = \frac{d\langle X\rangle_t}{X^2_t} = \frac{\sigma_t^2(X)}{X_t^2},
    \end{equation}
    both, $\PP$ and $\hat{\PP}$-almost surely. 
    Due to symmetry reasons, assume w.l.o.g.\ that $(t,x) \mapsto s^\ast(h)(t,x_t,\frac{\sigma_t(x)}{x_t})$ is $dt \otimes \pi$-integrable.
    With the preceding observations at hand, we obtain
    \begin{align*}
        \int_{C[0,1]}\int_0^1 s^\ast(h)\Big(t, x_t, \frac{\sigma_t(x)}{x_t} \Big) \, dt \, \pi(dx) 
        &=
        \E_\PP\Big[ \int_0^1 s^\ast(h) \Big(t,X_t,\frac{\sigma_t(X)}{X_t} \Big) \, dt \Big]
        \\
        &=
         \E_\PP \Big[\int_0^1 X_t h\Big(t,Y_t, \frac{\sigma_t(Y)}{Y_t} \Big) \, dt \Big]
         \\
         &=
         \E_\PP \Big[ X_1 \int_0^1 h\Big( t, Y_t, \frac{\sigma_t(Y)}{Y_t} \Big) \, dt \Big]
         \\
         &=
         \E_{\hat{\PP}} \Big[ \int_0^1 h\Big( t, Y_t, \frac{\sigma_t(Y)}{Y_t} \Big) \, dt \Big]\E_{\mathbb P}[X_1]         \\
         &= b(\mu)\int_{C[0,1]} \int_0^1 h \Big(t, y_t, \frac{\sigma_t(y)}{y_t} \Big) \, dt \, S(\pi)(dy),
    \end{align*}
    where the first and last equality are due to $X \sim \pi$ under $\PP$ and $Y \sim S(\pi)$ under $\hat{\PP}$.
    The second equality follows from \eqref{eq:quadratic variation} and the third is due to $X$ being a martingale under $\PP$.
    This concludes the proof.
\end{proof}

Recall the CN-transformation of the cost $h$ as in \eqref{eq:little_s_star}.
We close this section by highlighting two specific types of cost functions that go well together with the CN-transformation:

\begin{enumerate}[label = (T\arabic*)]
    \item \label{it:type 1 phomo} Assume that $h(t,x,\sigma) = \tilde h(t,x) \sigma$ and $\tilde h \colon [0,1] \times \RR \to \RR$ is Borel and bounded.
    \item \label{it:type funcitonal eq} Assume that $h(t,x,\sigma) = x h(t,1 / x, x^2\sigma)$ and $h$ is bounded from below.
\end{enumerate}
Indeed, for $h$ satisfying \ref{it:type 1 phomo} we have $s^\ast(h)(t,x,\frac{\sigma}{x}) = \tilde h(t,\frac1x) x^2\sigma=h(t,\frac1x,x^2\sigma)$ and note that this type encompasses as a special case ($\tilde h \equiv -1$) the geometric stretched Brownian motion from Section \ref{sec:AppgSBM}.

\begin{corollary}
    Assume that $h$ satisfies \ref{it:type 1 phomo}.
    Then
    \begin{multline}
        \label{eq:trevisan huesmann geometric}
        \inf_{\pi \in \scM_{[0,1]}^{\rm AC}(\mu,\nu)} \int_{C[0,1]} \int_0^1 h\Big(t,x_t, \frac{\sigma_t(x)}{x_t} \Big)\, dt \, \pi(dx)\\=
        b(\mu)\inf_{\hat{\pi} \in \scM_{[0,1]}^{\rm AC}(S(\mu),S(\nu))} \int_{C[0,1]} \int_0^1
        h(t,y_t,\sigma_t(y) )\, dt \, \hat{\pi}(dy).
    \end{multline}
    In particular, $\pi^\ast \in \scM_{[0,1]}^{\rm AC}(\mu,\nu)$ minimizes the right-hand side of \eqref{eq:trevisan huesmann geometric} if and only if $S(\pi^\ast)$ minimizes the left-hand side of \eqref{eq:trevisan huesmann geometric}.
\end{corollary}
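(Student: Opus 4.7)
The plan is to derive the corollary as an immediate application of Theorem \ref{thm:trevisan+huesmann.CLM}, after writing $s^\ast(h)$ in a convenient form for $h$ satisfying \ref{it:type 1 phomo}. The key computation is to unwind \eqref{eq:little_s_star} when $h(t,x,\sigma) = \tilde h(t,x)\sigma$: substituting directly gives
\[
    s^\ast(h)\!\left(t,x_t,\tfrac{\sigma_t(x)}{x_t}\right) = x_t\,\tilde h(t,1/x_t)\,x_t\sigma_t(1/x),
\]
and invoking the It\^o-formula identity $\sigma_t(1/x) = \sigma_t(x)/x_t^2$ (valid $\pi$-a.s.\ for $\pi \in \scM^{\rm AC}_{[0,1]}$, as already used in the proof of Theorem \ref{thm:trevisan+huesmann.CLM}), this collapses to $\tilde h(t,1/x_t)\,\sigma_t(x)$. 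Re-expressing in the variables $y = 1/x$ then produces exactly the integrand $h(t,y_t,\sigma_t(y))$ appearing on the right-hand side of \eqref{eq:trevisan huesmann geometric}, which is the algebraic fact recorded just before the corollary statement.

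Second, I would apply Theorem \ref{thm:trevisan+huesmann.CLM} to $h$ to obtain, for every $\pi \in \scM^{\rm AC}_{[0,1]}(\mu,\nu)$,
\[
    \int_{C[0,1]}\!\int_0^1 s^\ast(h)\!\left(t,x_t,\tfrac{\sigma_t(x)}{x_t}\right)dt\,\pi(dx) = b(\mu)\int_{C[0,1]}\!\int_0^1 h\!\left(t,y_t,\tfrac{\sigma_t(y)}{y_t}\right)dt\,S(\pi)(dy),
\]
and combining this with the previous simplification of $s^\ast(h)$ identifies both integrals with the ones appearing in \eqref{eq:trevisan huesmann geometric}. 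Finally, the bijection $S\colon \scM^{\rm AC}_{[0,1]}(\mu,\nu) \to \scM^{\rm AC}_{[0,1]}(S(\mu),S(\nu))$ guaranteed by Theorem \ref{thm:first_properties_CLM}\eqref{eq:PlanBij} permits to reparametrise the infimum on the right of \eqref{eq:trevisan huesmann geometric} via $\hat\pi = S(\pi)$. Taking infima then yields the claimed identity, and the ``if and only if'' characterisation of minimisers follows from the bijectivity of $S$ together with the fact that the equality holds pointwise in $\pi$ before taking the infimum.

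The main obstacle I expect is the bookkeeping between the two normalisations of the diffusion coefficient on either side of the CN-transformation: the geometric volatility $\sigma_t(x)/x_t$ arises naturally on the left (since the corresponding SDE is of geometric form $dX_t = \tilde\sigma_tX_t\,dB_t$ with $\tilde\sigma_t = \sigma_t(x)/x_t$), while the absolute volatility $\sigma_t(y)$ appears on the right. Checking that these reconcile correctly under the It\^o identities $\sigma_t(1/x) = \sigma_t(x)/x_t^2$ and $\sigma_t(x)/x_t = \sigma_t(y)/y_t$ is the entire content of the argument; once this is in place, the corollary is a one-line consequence of Theorem \ref{thm:trevisan+huesmann.CLM} together with \eqref{eq:PlanBij}.
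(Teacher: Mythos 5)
Your strategy is the paper's own: compute $s^\ast(h)$ for a cost of type \ref{it:type 1 phomo} and feed it into Theorem \ref{thm:trevisan+huesmann.CLM} together with the bijection \eqref{eq:PlanBij}. Your computation $s^\ast(h)\bigl(t,x_t,\tfrac{\sigma_t(x)}{x_t}\bigr)=\tilde h(t,1/x_t)\,\sigma_t(x)$ is correct, but the very next step --- ``re-expressing in the variables $y=1/x$ produces exactly the integrand $h(t,y_t,\sigma_t(y))$'' --- is where the argument breaks. First, the algebra is off: substituting $x=1/y$ and using $\sigma_t(1/y)=\sigma_t(y)/y_t^2$ gives $\tilde h(t,y_t)\,\sigma_t(y)/y_t^{2}$, not $\tilde h(t,y_t)\,\sigma_t(y)$. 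Second, and more importantly, no such substitution is admissible at this point: in Theorem \ref{thm:trevisan+huesmann.CLM} the transformed cost $s^\ast(h)$ is integrated against $\pi$ in the original variable, while $h$ is integrated against $S(\pi)$, and the passage between the two is the change of measure $d\hat\PP=\frac{X_1}{\E_\PP[X_1]}\,d\PP$ plus the tower property; $S(\pi)$ is \emph{not} the image of $\pi$ under $x\mapsto 1/x$. Concretely, applying the theorem to $h$ as you propose yields
\begin{equation*}
\int\!\!\int_0^1 \tilde h\Bigl(t,\tfrac1{x_t}\Bigr)\sigma_t(x)\,dt\,\pi(dx)
= b(\mu)\int\!\!\int_0^1 \tilde h(t,y_t)\,\frac{\sigma_t(y)}{y_t}\,dt\,S(\pi)(dy),
\end{equation*}
and neither side is one of the two expressions in \eqref{eq:trevisan huesmann geometric}: the right-hand integrand carries the relative volatility $\sigma_t(y)/y_t$, and the left-hand one evaluates $\tilde h$ at $1/x_t$. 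So the claimed ``identification of both integrals'' does not go through.

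The mismatch cannot be removed by a cleverer choice of cost. Applying Theorem \ref{thm:trevisan+huesmann.CLM} to $g(t,x,v):=h(t,x,xv)$ --- the choice that makes the right-hand side of \eqref{eq:trevisan huesmann geometric} appear --- gives $s^\ast(g)\bigl(t,x_t,\tfrac{\sigma_t(x)}{x_t}\bigr)=\tilde h\bigl(t,\tfrac1{x_t}\bigr)\tfrac{\sigma_t(x)}{x_t}=h\bigl(t,\tfrac1{x_t},\tfrac{\sigma_t(x)}{x_t}\bigr)$, hence
\begin{equation*}
\inf_{\pi\in\scM^{\rm AC}_{[0,1]}(\mu,\nu)}\int\!\!\int_0^1 h\Bigl(t,\tfrac1{x_t},\tfrac{\sigma_t(x)}{x_t}\Bigr)dt\,\pi(dx)
= b(\mu)\inf_{\hat\pi\in\scM^{\rm AC}_{[0,1]}(S(\mu),S(\nu))}\int\!\!\int_0^1 h\bigl(t,y_t,\sigma_t(y)\bigr)\,dt\,\hat\pi(dy),
\end{equation*}
with the reciprocal in the spatial slot, exactly as in the display $s^\ast(h)(t,x,\cdot)=h(t,\tfrac1x,\cdot)$ preceding the corollary. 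A direct change-of-measure check confirms the reciprocal is unavoidable: $b(\mu)\,\E_{\hat\PP}\bigl[\int_0^1\tilde h(t,Y_t)\sigma_t(Y)\,dt\bigr]=\E_{\PP}\bigl[\int_0^1\tilde h(t,1/X_t)\tfrac{\sigma_t(X)}{X_t}\,dt\bigr]$. Consequently your argument delivers \eqref{eq:trevisan huesmann geometric} as displayed only under the extra symmetry $\tilde h(t,x)=\tilde h(t,1/x)$ (in particular for $x$-independent $\tilde h$, which covers the gSBM case $\tilde h\equiv-1$); for a general bounded Borel $\tilde h$ your proof silently erases the reciprocal and therefore does not establish the displayed identity. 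The bookkeeping you yourself flag as ``the entire content of the argument'' is precisely where it fails, and the same issue propagates to your ``if and only if'' statement about minimizers.
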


Similarly, if $h$ satisfies \ref{it:type funcitonal eq} the optimization problems $\mathcal W^{h}$ and $\mathcal W^{s^\ast(h)}$ coincide, and we have $s^\ast(h)(t,x,\frac{\sigma}{x}) = h(t,x,\frac{\sigma}{x})$.
Again, as a corollary of Theorems \ref{thm:first_properties_CLM} and \ref{thm:second_properties_clm} we find

\begin{corollary}
    Assume that $h$ satisifies \ref{it:type funcitonal eq}. Then
    \begin{equation}
        \label{eq:trevisan huesmann functional}
        \mathcal W^h(S(\mu),S(\nu)) = \mathcal W^h(\mu,\nu).
    \end{equation}
    In particular, $\pi^\ast \in \scM^{\rm AC}_{[0,1]}(\mu,\nu)$ is optimal for the left-hand side of \eqref{eq:trevisan huesmann functional} if and only if $S(\pi^\ast)$ is optimal for the right-hand side of \eqref{eq:trevisan huesmann functional}.
\end{corollary}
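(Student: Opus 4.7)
The plan is to reduce the corollary to Theorem \ref{thm:trevisan+huesmann.CLM} by exploiting the self-duality of the cost $h$ under \ref{it:type funcitonal eq}. The algebraic core of the argument is the pointwise identity $s^\ast(h)(t,x_t,\sigma_t(x)/x_t) = h(t,x_t,\sigma_t(x)/x_t)$ along paths in $\scM_{[0,1]}^{\rm AC}$, which is already claimed in the remark just before the corollary.

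To verify it, I would start from the definition \eqref{eq:little_s_star} and use the It\^o relation $d\langle 1/X\rangle_t = d\langle X\rangle_t / X_t^4$ (established in the proof of Theorem \ref{thm:trevisan+huesmann.CLM}) to rewrite $x_t\sigma_t(1/x) = \sigma_t(x)/x_t$ for $\pi$-almost every path $x$, where $\pi \in \scM_{[0,1]}^{\rm AC}$. Substituting this into the right-hand side of \eqref{eq:little_s_star} and then invoking the functional equation from \ref{it:type funcitonal eq} produces the claimed pointwise identity. Consequently $c_{s^\ast(h)} = c_h$ pathwise $\pi$-a.s., and the minimization problems $\mathcal W^{s^\ast(h)}$ and $\mathcal W^h$ coincide over both $\scM_{[0,1]}^{\rm AC}(\mu,\nu)$ and $\scM_{[0,1]}^{\rm AC}(S(\mu),S(\nu))$.

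With this identification in hand, Theorem \ref{thm:trevisan+huesmann.CLM} directly yields the bijective correspondence: $\pi^\ast \in \scM_{[0,1]}^{\rm AC}(\mu,\nu)$ is optimal for $\mathcal W^h(\mu,\nu) = \mathcal W^{s^\ast(h)}(\mu,\nu)$ if and only if $S(\pi^\ast)$ is optimal for $\mathcal W^h(S(\mu),S(\nu))$. Equality of values in \eqref{eq:trevisan huesmann functional} follows by running Theorem \ref{thm:trevisan+huesmann.CLM} once more in the reverse direction, using the involution property \eqref{eq:Involution} together with $b(S(\mu)) = 1/b(\mu)$, which is a one-line computation from the definition of $S$. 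The only real obstacle is the algebraic step $s^\ast(h) = h$, which is not immediate from \ref{it:type funcitonal eq} alone but requires the It\^o identity to bridge $x_t\sigma_t(1/x)$ and $\sigma_t(x)/x_t$; everything else is bookkeeping for the bijection $S$ of Theorem \ref{thm:first_properties_CLM} \eqref{eq:PlanBij}.
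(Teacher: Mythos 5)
Your route is the same one the paper sketches (establish $s^\ast(h)=h$ under \ref{it:type funcitonal eq}, then quote Theorem \ref{thm:trevisan+huesmann.CLM}), but the step you single out as the ``algebraic core'' does not go through, and the It\^o relation is precisely what breaks it rather than what bridges it. Substituting $x_t\sigma_t(1/x)=\sigma_t(x)/x_t$ into the right-hand side of \eqref{eq:little_s_star} shows that, as a function of three variables with $v$ denoting the relative-volatility slot, $s^\ast(h)(t,x,v)=x\,h(t,1/x,v)$. Condition \ref{it:type funcitonal eq} says $h(t,x,v)=x\,h(t,1/x,x^2v)$, so what actually follows is $s^\ast(h)(t,x,v)=h(t,x,v/x^2)$, not $h(t,x,v)$: the volatility arguments are off by the factor $x^2$ (the claimed identity would only appear if one conflated $\sigma_t(x)$ with $\sigma_t(1/x)$, i.e.\ ignored the It\^o relation). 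A concrete check: $h(t,x,\sigma)=x\sqrt{\sigma}$ satisfies \ref{it:type funcitonal eq}, yet $s^\ast(h)(t,x,v)=\sqrt{v}\neq x\sqrt{v}=h(t,x,v)$. The functional equation that would deliver $s^\ast(h)=h$ is $h(t,x,\sigma)=x\,h(t,1/x,\sigma)$, without the $x^2$; so either the hypothesis must be read with a different volatility convention or this step has to be redone — it is not a consequence of \ref{it:type funcitonal eq} plus It\^o as you assert.

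Second, even granting $s^\ast(h)=h$, your derivation of the equality of values is circular. Theorem \ref{thm:trevisan+huesmann.CLM} then gives $\mathcal W^{h}(\mu,\nu)=\mathcal W^{s^\ast(h)}(\mu,\nu)=b(\mu)\,\mathcal W^{h}(S(\mu),S(\nu))$; applying the theorem ``in the reverse direction'', i.e.\ to the pair $(S(\mu),S(\nu))$, and using $b(S(\mu))=1/b(\mu)$ together with $S\circ S=\id$, reproduces exactly the same relation, so no combination of the two removes the factor $b(\mu)$. What this argument proves is $\mathcal W^h(\mu,\nu)=b(\mu)\,\mathcal W^h(S(\mu),S(\nu))$, which agrees with the display \eqref{eq:trevisan huesmann functional} only when $b(\mu)=1$ or the common value is $0$ or $+\infty$; a scaling argument (push $\mu,\nu$ forward under $x\mapsto cx$ and note that $S(c\mu)$ is the push-forward of $S(\mu)$ under $y\mapsto y/c$) shows the unnormalized equality cannot hold for all marginal pairs for a nontrivial cost of this type. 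To be fair, both difficulties trace back to the paper's own one-line remark asserting $s^\ast(h)=h$; your optimizer-correspondence step would indeed be immediate from Theorem \ref{thm:trevisan+huesmann.CLM} if that identity held. But as written your proof rests on an identity that fails under the stated form of \ref{it:type funcitonal eq} and on a cancellation of $b(\mu)$ that never takes place, so it does not establish the corollary.
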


\section{Shadow couplings}\label{sec:AppShadow}

Shadow couplings were introduced in \cite{BeJu21} and represent a class of martingale couplings between given marginals $\mu, \nu$. These are parametrized through a so-called \emph{source} $\hat \mu \in \cP_1(\RR \times [0,1])$ whose first marginal is $\mu$ and whose second marginal has no atoms.
We write
\[
    \hat{\scM}_{\{0\}} := \mathcal P_1(\RR \times [0,1]) \quad
    \text{and} \quad
    \hat{\scM}_{\{0,1\}} := \Big\{ \pi \in \mathcal P_1(\R^2_{++} \times [0,1]) : b(\pi_{(x_0,u)}) = x_0,\; \pi\text{-a.s.} \Big\}.
\]

The source measures serve as a parametrization of the set of shadow couplings, which we will now introduce.
We define the set of lifted martingale transport plans
\[
    \hat{\scM}_{\{0,1\}} (\hat \mu, \nu) := \Big\{ \pi \in \hat{\scM}_{\{0,1\}} : ((x,u) \mapsto (x_0,u))_\# \pi = \hat \mu, ((x,u) \mapsto x_1)_\# \pi = \nu \Big\}.
\]
Associated to every source $\hat \mu$, we can define a weak martingale transport problem
\begin{equation}
    \label{eq:shadow_cpl}
    V^{\hat \mu} := \inf_{\pi \in \scM_{\{0,1\}}(\mu,\nu)}
    \int C^{\hat \mu}( \pi_{x_0} ) \, \mu(dx_0),
\end{equation}
where the cost function is given by
\begin{align}\label{eq:ShadowInnerCost}
        C^{\hat \mu}(\eta) := \inf_{ \alpha \in \hat{\scM}_{\{0,1\}}( \delta_m \otimes \hat \mu_m, \eta ) }
    \int (1 - u) \sqrt{1 + x_1^2} \,  \alpha(dx,du) \quad\text{and}\quad m:=b(\eta).
\end{align}

Then the \emph{shadow coupling} $\pi^{\hat \mu}$ between $\mu$ and $\nu$ with source $\hat \mu$
is the unique optimizer to \eqref{eq:shadow_cpl}, cf.\ \cite[Proposition 6.2]{BeJu21}.
\begin{remark}
    We collect the following facts on shadow couplings:
    \begin{enumerate}
        \item The function $(1-u)\sqrt{1+y^2}$ could be replaced by an arbitrary (sufficiently integrable) function $\phi(u) \psi(y)$, where $\phi$ is strictly decreasing and $\psi$ is strictly convex, without changing the definition of the shadow $\pi^{\hat \mu}$.
        \item Let $R:\R_{++}\times [0,1]\to\R_{++}\times [0,1]:  (x,u)\mapsto(x, r(u))$, where $r:[0,1]\to [0,1]$ is increasing and such that $R_\# (\hat \mu)$ has no atom. Then $\pi^{\hat \mu} = \pi^{R_\# (\hat \mu)}$. 

        In particular we would not lose generality by assuming that the second marginal of $\hat \mu$ is the Lebesgue measure. However the present choice is more convenient for the subsequent arguments. 

        \item If $\hat \mu$ is the monotone coupling of its marginals, $\pi^{\hat\mu}$ is the \emph{left-monotone coupling} which is also studied in \cite{BeJu16, HeTo13, BeHeTo15, HoNo19a}. Likewise if $\hat \mu$ is the comonotone coupling of its marginals, $\pi^{\hat\mu}$ is the \emph{right-monotone coupling} (the mirrored version of the left-monotone coupling).

        If $\hat \mu$ is the product coupling of its marginals, then $\pi^{\hat\mu}$ is the so called \emph{sunset coupling}.
    \end{enumerate}
\end{remark}

\begin{figure}
   \centering
   \includegraphics[page=1,trim={0 0 0 5cm},width=0.75\textwidth]
       {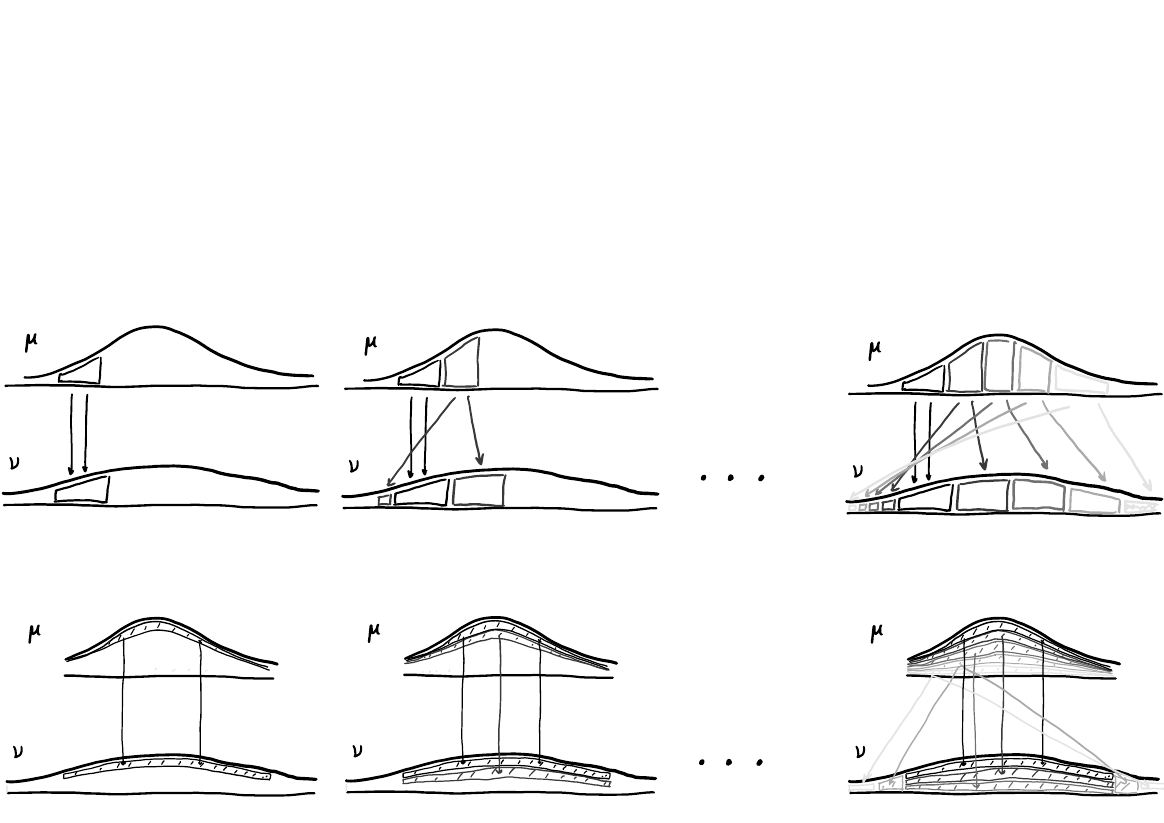} 
 \caption{\vspace{0cm} Depiction of the left-montone martingale coupling and the sunset coupling, see \cite{BeJu21}.}
 \label{fig:Test}
\end{figure}

In order to also cover this extended martingale transport setting, we define a transformation analogous to Section \ref{sec:CLM_trafo_new_sec}.
Let $I = \{0,1\}$ or $I = \{0\}$, then we define \emph{extended} CN-transformation
\begin{equation}
    \label{eq:def hat S}
    \hat S(\pi) : {\hat{\scM}}_I \to {\hat{\scM}}_I, \quad
    \hat S(\pi)(B) := \frac{\int x_T \delta_{(1 / x, u )}(B) \, \pi(dx,du)}{\int x_T \, \pi(dx)},
\end{equation}
for Borel $B \subseteq \R_{++}^{\lvert I\rvert} \times [0,1]$.

In probabilistic terms $ {\hat{\scM}}_{\{0,1\}} $ consists of all $\pi= \law_{\P}(X_0,U,X_1)$ where $U$ has a continuous distribution and $\E_\P[X_1|X_0,U] = X_0$. Writing $\hat \P = \frac {X_1}{\E X_1} \P$ we then have \begin{align}\label{eq:ExtendedS}
    \hat S(\pi) \law_{\hat \P} = \law_{\hat P} (1/X_0, U, 1/X_1).
\end{align}

This allows us to state our main result concerning shadow couplings that is

\begin{theorem}\label{thm:ShadowEqual}
    Let $\pi^{\hat \mu} \in \scM_{\{0,1\}}(\mu,\nu)$ be the shadow coupling with source $\hat \mu$.
    Then $S(\pi^{\hat \mu})\in \scM_{\{0,1\}}(S(\mu), S(\nu))$ is the shadow coupling with source $\hat S(\hat \mu)$.    
\end{theorem}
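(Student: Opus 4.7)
The plan is to verify that $S(\pi^{\hat\mu})$ satisfies the variational characterization \eqref{eq:shadow_cpl} defining the shadow coupling with source $\hat S(\hat\mu)$, and then to conclude by uniqueness of the shadow. The whole argument reduces to showing the pointwise identity
\[
    S^\ast(C^{\hat S(\hat\mu)})(\rho) \;=\; C^{\hat\mu}(\rho), \qquad \rho \in \mathcal P_1(\R_{++}),
\]
after which \eqref{eq:EquivalentWMOT} applied to the generalized cost $C := C^{\hat S(\hat\mu)}$ yields the claim in one stroke.

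Two observations drive this identity. First, in the definition \eqref{eq:def hat S} of $\hat S$ applied to $\hat\mu \in \hat{\scM}_{\{0\}}$, the weighting factor $x_0$ depends only on the first coordinate and not on the label $u$, so a direct disintegration gives $\hat S(\hat\mu)_{y_0} = \hat\mu_{1/y_0}$ for $y_0 \in \R_{++}$. Second, the inner cost $c(x_1,u) := (1-u)\sqrt{1+x_1^2}$ is self-dual under $S^\ast$ in the $x_1$-variable:
\[
    S^\ast(c)(x_1,u) = x_1(1-u)\sqrt{1+x_1^{-2}} = (1-u)\sqrt{1+x_1^2} = c(x_1,u).
\]

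Now fix $\rho \in \mathcal P_1(\R_{++})$ and set $m := b(\rho)$. Because the label $u$ enters all constructions as a passive spectator, the properties of $S$ in Theorem \ref{thm:first_properties_CLM} extend verbatim to the lifted map $\hat S$: it is an involution and restricts to a bijection
\[
    \hat S : \hat{\scM}_{\{0,1\}}(\delta_m \otimes \hat\mu_m, \rho) \;\longrightarrow\; \hat{\scM}_{\{0,1\}}(\delta_{1/m} \otimes \hat\mu_m, S(\rho)),
\]
where I use $\hat S(\hat\mu)_{1/m} = \hat\mu_m$ to identify the target. The Bayes-rule computation behind \eqref{eq:EquivalentMOT}, combined with $S^\ast$-invariance of $c$, gives for any such $\alpha$
\[
    \int c\,d\hat S(\alpha) = \frac{1}{m}\int S^\ast(c)\,d\alpha = \frac{1}{m}\int c\,d\alpha.
\]
Taking infima yields $m\,C^{\hat S(\hat\mu)}(S(\rho)) = C^{\hat\mu}(\rho)$, i.e.\ the desired identity since $b(\rho)=m$.

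Finally, \eqref{eq:EquivalentWMOT} with $C := C^{\hat S(\hat\mu)}$ says that the bijection $\pi \mapsto S(\pi)$ matches the minimizers of $\pi \mapsto \int C^{\hat\mu}(\pi_{x_0})\,\mu(dx_0)$ on $\scM_{\{0,1\}}(\mu,\nu)$ with the minimizers of $\hat\pi \mapsto \int C^{\hat S(\hat\mu)}(\hat\pi_{y_0})\,S(\mu)(dy_0)$ on $\scM_{\{0,1\}}(S(\mu),S(\nu))$. By definition of shadow couplings these are precisely $\pi^{\hat\mu}$ and $\pi^{\hat S(\hat\mu)}$, so $S(\pi^{\hat\mu}) = \pi^{\hat S(\hat\mu)}$. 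The main obstacle I anticipate is the careful bookkeeping needed to lift Theorems \ref{thm:first_properties_CLM} and \ref{thm:second_properties_clm} to the extended setting $\hat{\scM}$ with the spectator variable $u$; once the involution and cost-transformation formulas for $\hat S$ are in hand, the algebraic fact that $(1-u)\sqrt{1+x_1^2}$ is $S^\ast$-invariant is the underlying reason the shadow structure survives the change of numeraire.
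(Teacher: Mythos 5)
Your proposal is correct and follows essentially the same route as the paper: lift the CN-transformation to the extended space with the spectator variable $u$ (the paper's Proposition \ref{prop:extended sym op}), combine the functional equation $\sqrt{1+x^2}=x\sqrt{1+x^{-2}}$ with the lifted bijection on $\hat{\scM}_{\{0,1\}}(\delta_m\otimes\hat\mu_m,\cdot)$ to identify the transformed inner cost (the paper's Lemma \ref{lem:shadow_cpl_cost}), and conclude via \eqref{eq:EquivalentWMOT} and uniqueness of the shadow coupling. The only difference is cosmetic: you prove $S^\ast(C^{\hat S(\hat\mu)})=C^{\hat\mu}$ rather than the paper's $S^\ast(C^{\hat\mu})=C^{\hat S(\hat\mu)}$, which are equivalent since $S^\ast$ is an involution on generalized costs.
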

As a particular consequence of Theorem \ref{thm:ShadowEqual} (and the definition of $\hat S(\hat \mu)$) we obtain that the CN-transformation of the left-monotone martingale coupling is the right-monotone martingale coupling and vice versa. The CN-transformation of the sunset coupling is again the sunset coupling. 

\medskip 

We need some preparations before giving the proof of Theorem \ref{thm:ShadowEqual}.
The extended CN-transformation admits the following counterpart to Theorem \ref{thm:first_properties_CLM}.

\begin{proposition} \label{prop:extended sym op}
    Let $I = \{0 \}$ or $\{0,1\}$.
    The extended CN-transformation satisfies:
    \begin{enumerate}[label = (\roman*)]
        \item $\hat S$ is an involution, i.e.
        \begin{equation}
            \label{hat P1} \tag{\^P1}
            \hat S(\hat S(\pi)) = \pi \quad \text{for }\pi \in \hat{\scM}_I.
        \end{equation}
        \item The map 
        \begin{equation}
            \label{hat P2} \tag{\^P2}
            \hat{S} : \hat{\scM}_{\{0,1\}}(\hat \mu,\nu) \to \hat{\scM}_{\{0,1\}}(\hat S(\hat \mu), S(\nu))
        \end{equation}
        is a bijection whenever $\hat \mu \in \mathcal P_1(\RR \times [0,1])$ and $\nu \in \mathcal P_1(\RR)$.
        \item For every $\pi \in \hat{\scM}_{\{0,1\}}$ with $((x,u) \mapsto (x_0,u))_\# \pi = \hat \mu$, the disintegrations satisfy
        \begin{equation}
            \label{hat P3} \tag{\^P3}
            \hat S(\pi)_{(y_0,u)} = S(\pi_{(x_0,u)})\quad\text{and}\quad
            \hat S(\hat \mu)_{y_0} = \hat \mu_{x_0},
        \end{equation}
        for $x_0 = 1 / y_0$ and $S(\pi)$-a.e.\ $(y,u)$.
    \end{enumerate}
\end{proposition}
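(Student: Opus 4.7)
The plan is to mirror the proof of Theorem \ref{thm:first_properties_CLM}, carrying the auxiliary $[0,1]$-valued coordinate $U$ passively through every Bayes computation. Throughout, for $\pi \in \hat{\scM}_{\{0,1\}}$ I represent $\pi = \law_\PP(X_0, U, X_1)$ on some probability space and set $\hat\PP := \frac{X_1}{\E_\PP[X_1]} \PP$, so that by \eqref{eq:ExtendedS}, $\hat S(\pi) = \law_{\hat\PP}(1/X_0, U, 1/X_1)$. The case $I = \{0\}$ is treated identically with $X_1$ replaced by $X_0$.

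For \eqref{hat P1}, applying $\hat S$ a second time introduces the density $(1/X_1)/\E_{\hat \PP}[1/X_1]$ with respect to $\hat\PP$. A computation identical to the one in the proof of \eqref{eq:Involution} shows that the resulting reference measure is $\PP$ and the reconstructed triple is $(X_0, U, X_1)$, whence $\hat S(\hat S(\pi)) = \pi$.

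For \eqref{hat P2}, by the just-established involution it suffices to verify that $\hat S$ maps $\hat{\scM}_{\{0,1\}}(\hat\mu, \nu)$ into $\hat{\scM}_{\{0,1\}}(\hat S(\hat\mu), S(\nu))$. That the law of $1/X_1$ under $\hat\PP$ is $S(\nu)$ is an immediate consequence of \eqref{eq:PlanBij} applied to the projection onto the $(X_0, X_1)$-coordinates. For the joint law of $(1/X_0, U)$ under $\hat\PP$, the conditional martingale property $\E_\PP[X_1 | X_0, U] = X_0$ combined with Bayes' rule gives
\[
\E_{\hat\PP}[f(1/X_0, U)] = \frac{\E_\PP[X_0 f(1/X_0, U)]}{b(\mu)},
\]
which matches $\hat S(\hat\mu)(f)$ by \eqref{eq:def hat S}. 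The martingale property under $\hat\PP$ reduces via Bayes to $\E_{\hat\PP}[1/X_1 | X_0, U] = 1/X_0$, and since $x \mapsto 1/x$ is a bijection on $\RR$, the sigma-algebras generated by $(X_0, U)$ and by $(1/X_0, U)$ coincide.

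For \eqref{hat P3}, the first identity is obtained by Bayes: for bounded measurable $f$,
\[
\E_{\hat\PP}[f(1/X_1) | X_0, U] = \frac{\E_\PP[X_1 f(1/X_1) | X_0, U]}{X_0} = S(\pi_{(X_0, U)})(f),
\]
so reading off the disintegration of $\hat S(\pi) = \law_{\hat\PP}(1/X_0, U, 1/X_1)$ against its first two marginals yields $\hat S(\pi)_{(y_0, u)} = S(\pi_{(1/y_0, u)})$. The second identity $\hat S(\hat\mu)_{y_0} = \hat\mu_{1/y_0}$ is obtained by disintegrating $\hat\mu(dx_0, du) = \mu(dx_0)\, \hat\mu_{x_0}(du)$, testing $\hat S(\hat\mu)$ against a product $g(y_0) f(u)$, and changing variables $y_0 = 1/x_0$, which converts the weight $x_0 / b(\mu)$ into the density of $S(\mu)$ and leaves $\hat\mu_{x_0}$ untouched. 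The only point of genuine care in all of the above is the sigma-algebra identification needed to transfer the martingale property to the conditioning on $(1/X_0, U)$; the remainder is notational bookkeeping atop Theorem \ref{thm:first_properties_CLM}.
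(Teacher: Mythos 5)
Your proposal is correct and follows exactly the route the paper intends: the paper's proof is the one-line remark that, using \eqref{eq:ExtendedS}, everything is completely analogous to Theorem \ref{thm:first_properties_CLM}, and your write-up simply carries out those Bayes-rule computations with the coordinate $U$ carried along passively, including the sigma-algebra identification $\sigma(X_0,U)=\sigma(1/X_0,U)$ needed for the conditional statements. No gaps; you have merely made explicit what the paper leaves implicit.
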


\begin{proof} Using \eqref{eq:ExtendedS},
    the proof is completely analogous to Theorem \ref{thm:first_properties_CLM}.  \qedhere
\end{proof}

\begin{lemma} \label{lem:shadow_cpl_cost}
    Let $\hat \mu \in \mathcal P_1(\RR \times [0,1])$ with first marginal $\mu$.
    Then we have for $\mu$-a.e.\ $x_0$ 
    \begin{equation}
        S^\ast(C^{\hat \mu})(\eta) =
        C^{\hat S(\hat \mu)}(\eta)\quad \text{for every }\eta \in \mathcal P_1(\RR)\text{ with }b(\eta) = x_0.
    \end{equation}
\end{lemma}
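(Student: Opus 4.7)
The plan is to realize both infima defining $C^{\hat\mu}(S(\eta))$ and $C^{\hat S(\hat\mu)}(\eta)$ as optimization over feasible sets that are in bijection under the extended CN-transformation $\hat S$ (Proposition \ref{prop:extended sym op}), and then verify that $\hat S$ rescales the integrand $(1-u)\sqrt{1+x_1^2}$ by exactly the factor $b(\eta)=x_0$ needed to recover $S^\ast$.

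First I would unravel the definitions. Since $b(S(\eta))=1/b(\eta)=1/x_0$, the quantity $C^{\hat\mu}(S(\eta))$ is an infimum over $\alpha\in\hat{\scM}_{\{0,1\}}(\delta_{1/x_0}\otimes\hat\mu_{1/x_0},S(\eta))$, while $C^{\hat S(\hat\mu)}(\eta)$ is an infimum over $\beta\in\hat{\scM}_{\{0,1\}}(\delta_{x_0}\otimes\hat S(\hat\mu)_{x_0},\eta)$. Property \eqref{hat P3} gives $\hat S(\hat\mu)_{x_0}=\hat\mu_{1/x_0}$ for $\mu$-a.e.\ $x_0$, so the $[0,1]$-components of both sources agree. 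A direct evaluation of \eqref{eq:def hat S} in $\hat{\scM}_{\{0\}}$ yields $\hat S(\delta_{1/x_0}\otimes\hat\mu_{1/x_0})=\delta_{x_0}\otimes\hat\mu_{1/x_0}$, and because $S(S(\eta))=\eta$ by \eqref{eq:Involution}, the bijection \eqref{hat P2} carries the feasible set of $C^{\hat\mu}(S(\eta))$ onto the feasible set of $C^{\hat S(\hat\mu)}(\eta)$.

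Second, I would compare the integrands along this bijection. For $\alpha$ feasible and $\beta:=\hat S(\alpha)$, the normalizing denominator in \eqref{eq:def hat S} equals $\int x_1\,\alpha(dx,du)=b(S(\eta))=1/x_0$, and substituting $(y_0,y_1)=(1/x_0,1/x_1)$ picks up a weight $x_1$, so the elementary identity $x_1\sqrt{1+1/x_1^2}=\sqrt{1+x_1^2}$ (valid on $\RR$) gives
\[
\int (1-u)\sqrt{1+y_1^2}\,\beta(dy,du) \;=\; x_0 \int (1-u)\sqrt{1+x_1^2}\,\alpha(dx,du).
\]
Taking infima on both sides and recalling that $S^\ast(C^{\hat\mu})(\eta)=b(\eta)C^{\hat\mu}(S(\eta))=x_0\,C^{\hat\mu}(S(\eta))$ yields the claim.

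I do not foresee a genuine obstacle: the cost $(1-u)\sqrt{1+x_1^2}$ is tailored precisely so that its multiplicative structure matches the reweighting-and-inversion built into $\hat S$, which is why the two factors of $x_1$ in the CN-weight and the change of variables cancel perfectly. The only point demanding a small amount of care is the quantifier ``$\mu$-a.e.\ $x_0$'', which enters solely through the almost-everywhere uniqueness of disintegration kernels; the identification $\hat S(\hat\mu)_{y_0}=\hat\mu_{1/y_0}$ holds for $S(\mu)$-a.e.\ $y_0$, equivalently for $\mu$-a.e.\ $x_0=1/y_0$, and this is the set on which the equality is asserted.
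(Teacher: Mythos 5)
Your mechanism is exactly the paper's: push the feasible set of one inner problem onto the other via the extended transformation $\hat S$ (using \eqref{hat P1}--\eqref{hat P3}), note $\hat S(\delta_{1/x_0}\otimes\rho)=\delta_{x_0}\otimes\rho$, and use the functional equation $x\sqrt{1+1/x^2}=\sqrt{1+x^2}$ together with the reweighting in \eqref{eq:def hat S} to see that the cost picks up exactly the factor $b(\eta)$; all of those computations are correct. The gap is in the a.e.\ bookkeeping, at the one step you declare harmless: you invoke ``$\hat S(\hat\mu)_{x_0}=\hat\mu_{1/x_0}$ for $\mu$-a.e.\ $x_0$''. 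Property \eqref{hat P3} identifies the disintegration of $\hat S(\hat\mu)$ only at $S(\mu)$-generic points: it yields $\hat S(\hat\mu)_{y_0}=\hat\mu_{1/y_0}$ for $S(\mu)$-a.e.\ $y_0$, i.e.\ $\hat S(\hat\mu)_{1/x_0}=\hat\mu_{x_0}$ for $\mu$-a.e.\ $x_0$. That is not what you use: the first marginal of $\hat S(\hat\mu)$ is $S(\mu)$, which is in general not equivalent to $\mu$ (if $\mu$ is concentrated on $(1,\infty)$, then $S(\mu)$ sits on $(0,1)$ and the two are mutually singular), so the kernel $\hat S(\hat\mu)_{x_0}$ at $\mu$-generic $x_0$ is not determined at all (and likewise $\hat\mu_{1/x_0}$ is a version-dependent object). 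Your closing remark ``equivalently for $\mu$-a.e.\ $x_0=1/y_0$'' correctly translates the null sets, but it places the identity at the points $1/x_0$, not at the points $x_0$ where your argument needs it; as written the step fails whenever $\mu$ and $S(\mu)$ are not equivalent.

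The fix is a relabeling that lands you on the paper's own proof: establish the identity at arguments whose barycenter is $1/x_0$ for $\mu$-a.e.\ $x_0$ (equivalently at $S(\mu)$-a.e.\ barycenters), i.e.\ show $S^\ast(C^{\hat\mu})(S(\eta))=C^{\hat S(\hat\mu)}(S(\eta))$ for every $\eta$ with $b(\eta)=x_0$. Then the only kernels that appear are $\hat\mu_{x_0}$ and $\hat S(\hat\mu)_{1/x_0}$, which is precisely what \eqref{hat P3} controls, and your bijection-plus-functional-equation computation goes through verbatim with the same constant bookkeeping. This is also the version needed in the proof of Theorem \ref{thm:ShadowEqual}, where the cost is evaluated at kernels $\hat\pi_{y_0}$ of couplings in $\scM_{\{0,1\}}(S(\mu),S(\nu))$, so the relevant barycenters are $S(\mu)$-distributed. (The lemma's phrasing ``for every $\eta$ with $b(\eta)=x_0$'' invites your literal reading, but the paper's proof proves the statement at the argument $S(\eta)$, i.e.\ at barycenter $1/x_0$; taken literally at barycenter $x_0$ the statement is not even well-posed, for the reason above.)
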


\begin{proof}
Observe that $f(x) := \sqrt{1 + x^2}$ satisfies the functional equation $f(x) = x f(1 / x)$.
Let us next set $m :=b(\eta)$. This means that for $\alpha \in \hat{\scM}_{\{0,1\}}( \delta_{m} \otimes \hat S(\hat \mu)_{m}, \eta )$ we have 
\begin{equation}
    \label{eq:sqrt identity2}
    m \int (1 - u) f(y_1) \, \hat{S}(\alpha)(dy, du) = 
    \int (1 - u) x_1 f( 1 / x_1) \, \alpha (dx,du) = 
    \int (1 - u) f(x_1) \, \alpha(dx,du).
\end{equation}
Further, it follows from \eqref{hat P2} and \eqref{hat P3} that for $\mu$-a.e.\ $x_0 = 1 / y_0$, $\delta_{y_0} \otimes \hat \mu_{x_0} = \delta_{y_0} \otimes \hat S(\hat \mu)_{y_0}$ and
\begin{equation}
    \label{eq:hatscM in prop}
      \hat S : \hat{\scM}_{\{0,1\}}(\delta_{x_0} \otimes \hat \mu_{x_0}, \eta) \to \hat{\scM}_{\{0,1\}}(\delta_{y_0} \otimes \hat S(\hat \mu)_{y_0},S(\eta)),
\end{equation}
is bijective.
Combining these two observations yields (for $\mu$-a.e.\ $x_0 = 1 / y_0$ and every $\eta \in \mathcal P_1(\RR)$ with $b(\eta) = x_0$)
\begin{align*}
    S^\ast(C^{\hat \mu})(S(\eta)) &=
    x_0 C^{\hat \mu}( \eta ) \\
    &=
    x_0 \inf_{\alpha\in\hat{\scM}_{\{0,1\}}(\delta_{x_0} \otimes \hat \mu_{x_0}, \eta)} \int (1-u)f(x_1)\, \alpha(dx,du)\\
    &=
    x_0 \inf_{\alpha \in \hat{\scM}_{\{0,1\}}(\delta_{y_0} \otimes \hat S(\hat \mu)_{y_0}, S(\eta) )}
    \int (1-u) f(y_1) \, \hat{S}(\alpha) (dy,du)\\
    &=
    \inf_{\alpha \in \hat{\scM}_{\{0,1\}}(\delta_{y_0} \otimes \hat S(\hat \mu)_{y_0}, S(\eta) )} 
    \int (1 - u) f(x_1) \, \alpha(dx,du) = C^{\hat S(\hat \mu)}(S(\eta)),
\end{align*}
where the first equality is due to how $S^\ast$ acts on weak transport cost functions, the third stems from \eqref{eq:hatscM in prop}, and the fourth follows from \eqref{eq:sqrt identity2}.
\end{proof}

\begin{proof}[Proof of Theorem \ref{thm:ShadowEqual}]
    By Theorem \ref{thm:second_properties_clm}, when $\pi^{\hat \mu}$ is minimizer of \eqref{eq:shadow_cpl} then $S(\pi^{\hat \mu})$ is minimizer of
    \[
        \inf_{\pi \in \scM_{\{0,1\}}(S(\mu),S(\nu))} \int S^\ast(C^{\hat \mu})(\pi_{x_0}) \, \mu(dx_0),
    \]
    whence, by Lemma \ref{lem:shadow_cpl_cost} this problem is equivalent to solving \eqref{eq:shadow_cpl} between $S(\mu)$ and $S(\nu)$ with source $\hat S(\hat \mu)$.
    This shows the assertion.
\end{proof}

\bibliographystyle{abbrv}
\bibliography{joint_biblio}

\end{document}